\documentclass[letterpaper, 10pt,  reqno]{amsart}

\usepackage{amsmath,amssymb,amscd,amsthm,amsxtra, esint}

\usepackage[margin=1.2in,marginparwidth=1.5cm, marginparsep=0.5cm]{geometry}

\usepackage[mathscr]{euscript}

\usepackage[implicit=true]{hyperref}

\usepackage{dsfont}%for indicator funct.

\newcommand{\R}{\mathbb R}
\newcommand{\Z}{\mathbb Z}
\newcommand{\T}{\mathbb T}
\newcommand{\dd}{\,\mathrm d}
\newcommand{\one}{\mathbf 1}
\newcommand{\norm}[1]{\left\lVert#1\right\rVert}
\newcommand{\Aone}{\mathcal A_1}
\newcommand{\Atwo}{\mathcal A_2}
\DeclareMathOperator*{\esssup}{ess\,sup}
\DeclareMathOperator{\supp}{supp}
\DeclareMathOperator{\dist}{dist}

\hypersetup{hidelinks}

\allowdisplaybreaks[2]

\usepackage{cancel}

\usepackage{color}

\definecolor{gr}{rgb}   {0.,   0.69,   0.23 }
\definecolor{bl}{rgb}   {0.,   0.5,   1. }
\definecolor{mg}{rgb}   {0.85,  0.,    0.85}
\definecolor{yl}{rgb}   {0.8,  0.7,   0.}
\definecolor{or}{rgb}  {0.7,0.2,0.2}

\newtheorem{theorem}{Theorem} [section]
\newtheorem{THM:main}{Theorem}
\newtheorem{lemma}[theorem]{Lemma}
\newtheorem{proposition}[theorem]{Proposition}

\newtheorem{definition}[theorem]{Definition}

\def\norm#1{\|#1\|}

\newcommand{\les}{\lesssim}
\newcommand{\ges}{\gtrsim}

\numberwithin{equation}{section}
\numberwithin{theorem}{section}

\makeatletter
\@namedef{subjclassname@2020}{%
  \textup{2020} Mathematics Subject Classification}
\makeatother

\begin{document}
\baselineskip = 14pt

\title[$L^4$ Strichartz estimate for the Zakharov--Kuznetsov equation]{Sharp $L^4$ Strichartz estimate for the Zakharov--Kuznetsov equation on $\mathbb{R}\times\mathbb{T}$}

\author[E.~Ba\c{s}ako\u{g}lu, W. Hu, Y.~Wang, O. Yılmaz]
{Engin Ba\c{s}ako\u{g}lu, Wenting Hu, Yuzhao Wang and Oğuz Yılmaz}

\address{Engin Ba\c{s}ako\u{g}lu\\Department of Mathematics, Bilkent University, Ankara\\
06800, Türkiye}
\email{ebasakoglu@bilkent.edu.tr} 

\address{Wenting Hu$^{1,2}$\\
$^{1}$ School of Mathematics and Statistics,
Henan Normal University, Xinxiang 453007, P. R. China\\
$^{2}$ School of Mathematics,
University of Birmingham, Watson Building, Edgbaston,
Birmingham B15 2TT, United Kingdom}
\email{huwenting@htu.edu.cn}

\address{Yuzhao Wang\\School of Mathematics\\ University of Birmingham\\ Watson Building\\ Edgbaston\\ Birmingham \\B15 2TT\\ United Kingdom}
\email{y.wang.14@bham.ac.uk}

\address{O\u{g}uz Yılmaz \\ Department of Mathematics, Bo\u gazi\c ci University, 
Bebek 34342, Istanbul, Türkiye}
\email{oguz.yilmaz@bogazici.edu.tr}

\subjclass[2020]{35R01, 37L50}

\keywords{Zakharov--Kuznetsov equation, Strichartz estimate, waveguide manifold,
semi-algebraic set}

\begin{abstract}
We prove the local $L^4$ Strichartz estimate without derivative loss for the Zakharov--Kuznetsov flow on $\R\times\T$. This reaches the sharp spatial regularity endpoint and implies the embedding $X^{0,b}\hookrightarrow L^4$ for every $b>1/2$. The proof reduces the estimate to a semi-algebraic mixed-measure estimate.
\end{abstract}

%\date{\today}

%\vspace*{-5mm}

%%
%
\maketitle

\vspace{-3mm}

\section{Introduction}
\label{sec:intro}

Let $\T=\R/(2\pi\Z)$. We consider the Zakharov--Kuznetsov flow, which is derived from the linear equation
\begin{equation*}
\partial_tu+\partial_x(\partial_x^2+\partial_y^2)u=0,
\qquad (x,y)\in\R\times\T.
\end{equation*}
Its solution operator is
\begin{equation}
\label{eq:flow}
U(t)f=e^{-t\partial_x\Delta}f,
\qquad
\widehat{U(t)f}(\xi,k)=e^{it\phi(\xi,k)}\widehat f(\xi,k),
\qquad
\phi(\xi,k)=\xi(\xi^2+k^2).
\end{equation}
In this paper, we prove the sharp local $L^4$ Strichartz estimate
for the Zakharov--Kuznetsov flow on $\R\times\T$.
Farah and Molinet \cite[Proposition 3.1]{FM} established
\begin{equation}
\label{eq:FM}
\norm{u}_{L^4(\R_t\times\R\times\T)}
\lesssim \norm{u}_{X^{1/6,\,3/8}}.
\end{equation}
More recently, Nowicki-Koth \cite[Theorem 1.1]{NK} improved
the spatial regularity requirement to
\begin{equation}
\label{eq:NK}
\norm{u}_{L^4(\R_t\times\R\times\T)}
\lesssim_{\varepsilon,b}\norm{u}_{X^{\varepsilon,b}},
\qquad \varepsilon>0,\quad b>\tfrac12.
\end{equation}
Here $X^{s,b}$ denotes the Bourgain space associated with
the linear flow \eqref{eq:flow}, equipped with the norm
\begin{equation*}
\norm{u}_{X^{s,b}}
=
\left(
\sum_{k\in\Z}\int_{\R^2}
\langle(\xi,k)\rangle^{2s}
\langle\tau-\phi(\xi,k)\rangle^{2b}
|\widehat u(\tau,\xi,k)|^2
\dd\tau\dd\xi
\right)^{1/2}.
\end{equation*}
Although the modulation exponents $b$ in \eqref{eq:FM} and
\eqref{eq:NK} differ, the latter estimate reduces the
spatial derivative loss $s$ to an arbitrarily small positive
amount $\varepsilon > 0$. Moreover, Nowicki-Koth
\cite[Proposition 3.10]{NK} showed that such an estimate
fails for negative spatial regularity $s < 0$. Thus, the endpoint
$s=0$ remained open. Our main result resolves this
endpoint problem by removing the derivative loss entirely.

\begin{theorem}
\label{thm:main}
For every $f\in L^2(\R\times\T)$,
\begin{equation}
\label{eq:main}
\norm{U(t)f}_{L^4([0,1]\times\R\times\T)}
\lesssim\norm f_{L^2(\R\times\T)}.
\end{equation}
Consequently, for every $b>1/2$,
\begin{equation}
\label{eq:embedding}
\norm u_{L^4(\R_t\times\R\times\T)}
\lesssim_b\norm u_{X^{0,b}}.
\end{equation}
\end{theorem}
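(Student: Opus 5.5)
We will prove \eqref{eq:main} by the standard $TT^*$/bilinear counting route. Once \eqref{eq:main} holds, \eqref{eq:embedding} follows by the transference principle. Writing $u=\int e^{it\tau}U(t)f_\tau\,d\tau$ with $\widehat{f_\tau}(\xi,k)=\widehat u(\tau+\phi(\xi,k),\xi,k)$ and using Cauchy--Schwarz in $\tau$ with $b>1/2$ gives the estimate on $[0,1]$. Time translation invariance, a partition of unity in time and square-summation then pass to $\R_t$.

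For \eqref{eq:main}, square the solution: $\norm{U(t)f}_{L^4}^2=\norm{(U(t)f)^2}_{L^2}$. Insert a smooth time cutoff $\chi(t)$ and take the space-time Fourier transform. Then $\widehat{\chi (Uf)^2}(\tau,\xi,k)$ equals
\[
\sum_{k_1+k_2=k}\int_{\xi_1+\xi_2=\xi}\widehat\chi\bigl(\tau-\phi(\xi_1,k_1)-\phi(\xi_2,k_2)\bigr)\widehat f(\xi_1,k_1)\widehat f(\xi_2,k_2)\,d\xi_1 .
\]
Since $\widehat\chi$ is rapidly decaying, Cauchy--Schwarz reduces everything to a uniform bound on a mixed (Lebesgue in $\xi_1$, counting in $k_1$) measure:
\[
\sup_{\tau,\xi,k}\;\Big|\Big\{(\xi_1,k_1)\in\R\times\Z:\ |\tau-\phi(\xi_1,k_1)-\phi(\xi-\xi_1,k-k_1)|\le 1\Big\}\Big|\lesssim 1 .
\]
Any logarithmic loss here would be exactly the $\varepsilon$ of \eqref{eq:NK}, so the task is to get a bound that is genuinely $O(1)$.

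The key step is this measure estimate. The set $E_{\tau,\xi,k}$ is semi-algebraic in $(\xi_1,k_1)$, defined by a cubic polynomial inequality of bounded degree. Set $\eta=\xi_1-\xi/2$ and $m=k_1-k/2$ (so $m$ lies in $\Z$ or $\Z+\tfrac12$). The resonance function becomes
\[
\phi(\xi_1,k_1)+\phi(\xi_2,k_2)=\tfrac{\xi}{4}(\xi^2+k^2)+3\xi\eta^2+\xi m^2+4\eta k m ,
\]
which is quadratic in $(\eta,m)$, with quadratic form matrix $\begin{pmatrix}3\xi & 2k\\ 2k & \xi\end{pmatrix}$ of determinant $3\xi^2-4k^2$. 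For each fixed $m$, the $\eta$-slice is a sublevel set of a quadratic, of measure at most $\min\bigl(C,\ C|\xi|^{-1/2}\bigr)$ up to a distance-to-critical-value factor. Summing over $m$ then reduces to counting lattice points $m$ with $m^2$ or the completed square in a window of length $O(1/|\xi|)$ after normalization.

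We split into cases. In the elliptic case $3\xi^2-4k^2\gtrsim \xi^2$, completing the square gives a bounded-width annulus of area $O(1/|\det|^{1/2})\cdot O(1)$. Its mixed measure is controlled by a van der Corput/second-derivative count in $m$, since the $\eta$-slice integrates like $(\text{dist})^{-1/2}$ and sums over $m$ as $\sum_m |c-\alpha m^2|_{+}^{-1/2}$-type expressions with $\alpha\sim|\xi|$. The hyperbolic case $4k^2>3\xi^2$ is the genuinely delicate one, and so is the near-degenerate case $|3\xi^2-4k^2|\ll\xi^2$. The sublevel set is then a neighborhood of a hyperbola, or near a pair of lines, where there are long thin pieces.

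We expect this degenerate or hyperbolic regime to be the main obstacle. The plan there is to diagonalize along the asymptotic directions and use that $\eta$ is a continuous variable. The width of the neighborhood of each branch at distance $R$ from the center decays like $1/(|\xi|R)$ or better. Counting the $m$ that meet each dyadic piece gives a geometric sum rather than a harmonic one, which avoids the log. Near the degenerate lines, we will use that the measure of $\{\eta: |a\eta^2+b\eta+c|\le1\}$ is at most $\min(|a|^{-1/2},|b|^{-1})$-type bounds uniformly. If the direct count still produces a logarithm, the fallback is a further Whitney-type decomposition in $k$ combined with almost orthogonality in $k$ at the $L^2$ level: square-sum over $k$-blocks of size $\sim|\xi|^{1/2}$ before applying Cauchy--Schwarz.
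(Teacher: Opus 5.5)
\textbf{There is a genuine gap.} The step that fails is the reduction to $\sup_{\tau,\xi,k}|E_{\tau,\xi,k}|\lesssim1$. That bound is false, so Cauchy--Schwarz with a supremum over the output frequency cannot give \eqref{eq:main}, with or without a logarithm. (Incidentally, the cross term is $2km\eta$, not $4km\eta$, so the determinant is $3\xi^2-k^2$; this does not affect what follows.)

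\emph{Hyperbolic outputs.} For $k^2>3\xi^2$ the phase is $3\xi(\eta-r_+m)(\eta-r_-m)$. Near each root, the $\eta$-slice at level $m$ has length $\sim(|m|\sqrt{k^2-3\xi^2})^{-1}$. A dyadic block $|m|\sim2^j$ therefore contributes $\sim(k^2-3\xi^2)^{-1/2}$, independently of $j$, and the sum over $j$ diverges. Your claim that the dyadic count is geometric is incorrect: a unit-thickness neighbourhood of a hyperbola has infinite measure. The simplest instance is $\xi=0$, $k\neq0$. There the set is $\{|\tau-2km\eta|\le1\}$, with mixed measure $\sum_m(|k||m|)^{-1}=\infty$. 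If moreover $k$ is even and $|\tau|\le1$, the slice $m=0$ is all of $\R$. This is the exact resonance $\phi(\eta,k/2)+\phi(-\eta,k/2)=0$ coming from the oddness of $\phi$.

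\emph{Elliptic outputs.} These are not uniform either. For $k=0$ the set is an ellipse of area $\sim|\xi|^{-1}$, which is your own $|\det|^{-1/2}$ and blows up as $\xi\to0$.

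Since hyperbolic outputs form a set of positive measure, the $L^2$ bound you obtain is infinite. The $k$-block almost-orthogonality fallback does not rescue it. The divergent part of a single fibre is spread over infinitely many input blocks, all feeding the same output $(\tau,\xi,k)$, so there is no orthogonality to exploit.

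\textbf{The missing idea} is that the Cauchy--Schwarz pairing must depend on the configuration. In the paper's variables \eqref{eq:interaction}, your pairing fixes $a=(z_1+z_3)/2$. Its fibre is the shell $|Q_a(w)-\gamma|\lesssim1$, with $w=b+c$. This shell has measure $\lesssim|R_aS_a|^{-1/2}+|R_a+S_a|^{-1/2}$ when $R_aS_a>0$, and infinite measure when $R_aS_a<0$.

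The paper proceeds as follows:
\begin{itemize}
\item It keeps the quadrilinear form throughout.
\item Configurations with at least two singular vectors are handled by strip arguments.
\item Otherwise it fixes a regular $c$ and pairs $F(d+c)^2F(d-c)^2$. On $\mathcal A_1$ the extra condition $|T(c,d,e)|^2\le100|Q_c(d)Q_c(e)|$ cuts off the hyperbolic tails, so the DFZ estimate gives bounded area.
\item On $\mathcal A_2$ it switches to fixing $b$ and $s=a+c$. There the defining inequality of $\mathcal A_2$ bounds $Q_c(b)$ and confines the fibre to bounded area.
\item The semi-algebraic mixed-measure lemma of DWWZ converts these area and section-length bounds into mixed-measure bounds.
\end{itemize}

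Your transference argument for \eqref{eq:embedding} is fine.
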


The proof reduces the Strichartz estimate to uniform
mixed-measure bounds on $\R\times\Z$, following an approach
used in \cite{BSTW26,BSTW25,DFZ,DWWZ,TT01}.
A central tool is the estimate of Deng, H.~Wang, Y.~Wang
and Zhao \cite[Lemma 4.2]{DWWZ}, which controls the mixed
measure of a semi-algebraic set of bounded complexity
by its Euclidean area and the maximal length of its
sections parallel to the continuous frequency axis.
The same estimate is used in \cite{DFZ} to prove
the sharp $L^4$ Strichartz estimate for the hyperbolic
Schr\"odinger equation on $\R\times\T$.

The additional difficulty in the present problem comes
from the cubic Zakharov--Kuznetsov phase.
The associated four-wave resonance is represented by
the symmetric trilinear form $T$ in \eqref{eq:resonance}.
Fixing one interaction variable $c$ leads to the quadratic
form $Q_c(v)=T(c,v,v)$, whose coefficients depend on $c$.
Uniform estimates for this family must account for two
types of degeneracy. In the coordinates \eqref{eq:RZ},
the form $Q_c$ loses rank when $R_c=0$ or $S_c=0$,
while its quadratic coefficient in the continuous
frequency variable $\xi_v$ vanishes when $R_c+S_c=0$.
These three lines determine the initial decomposition
of the interaction set.

For the remaining interactions, where at most one
of $a,b,c$ is singular, we use a decomposition inspired
by \cite{DFZ} and apply a different pairing of the Fourier
factors on each region.
In the first pairing, $c$ is fixed, and the Euclidean
area estimate in the hyperbolic case follows from
\cite[Proposition 2.4, estimate (2.7)]{DFZ} after
a linear change of variables and scaling.
In the second pairing, we fix $b$ and $s=a+c$,
so the parameter $c=s-a$ varies with the integration
variable $a$.
The normalization used for the first pairing therefore
does not give a direct reduction to \cite{DFZ},
and we establish the required measure bound
by a separate argument.

The rest of the paper is organised as follows.
Section~\ref{sec:prelim} contains the Fourier
reduction and the auxiliary measure estimates.
We prove Theorem~\ref{thm:main} in
Section~\ref{sec:strips}.

\subsection{Notation}

We write $z=(\xi,k)$ for an original frequency, where $\xi\in\R$ and $k\in\Z$, and $\langle z\rangle=(1+|z|^2)^{1/2}$. Integration over frequency space means Lebesgue integration in the continuous coordinate and counting in the discrete coordinate. The notation $|E|_{\R\times\Z}$ denotes this mixed measure, while $|E|_{\R^2}$ denotes Euclidean area.

For a frequency vector $z = (\xi,k) \in \R\times\Z$, its transformed coordinates are
\begin{equation}\label{eq:RZ}
R_z = \sqrt3\,\xi+k,\qquad S_z=\sqrt3\,\xi-k.
\end{equation} 
The relations $A\lesssim B$ and $A\sim B$ denote an inequality up to a constant and two-sided comparability, respectively. Once the strip width $K$ and phase thickness $C_0$ are fixed, constants may depend on them but not on the free frequencies. A dyadic number is an element of $2^\Z$.

\section{Preliminaries}
\label{sec:prelim}

\subsection{Fourier reduction}
\label{SUB:FR}
We first work with smooth data of bounded Fourier support; the estimates extend by approximation. Choose a nonnegative even $\chi\in\mathcal S(\R)$ such that $\chi\ge1$ on $[0,1]$ and $\supp\widehat\chi\subset[-M_0,M_0]$. Such a function is obtained as a sufficiently large multiple of $|\check\rho|^2$, where $\rho\in C_c^\infty(\R)$ is real, nonnegative, and supported sufficiently close to zero.

Expanding $\int\chi(t)|U(t)f|^4\dd t\dd x\dd y$ and taking absolute values gives
\begin{equation}
\label{eq:fourier-reduction}
\norm{U(t)f}_{L^4([0,1]\times\R\times\T)}^4
\lesssim
\int\delta_0(z_1-z_2+z_3-z_4)
\one_{\{|\Omega|\le M_0\}}\prod_{j=1}^4F(z_j)
\dd z_1\cdots\dd z_4,
\end{equation}
where $F=|\widehat f|$, $\delta_0$ denotes the product
of the Dirac delta in the continuous frequency variable
and the Kronecker delta in the discrete frequency
variable, and
\begin{align}\label{eq:omega}
\Omega=\phi(z_1)-\phi(z_2)+\phi(z_3)-\phi(z_4),
\end{align}
where $\phi$ is given in \eqref{eq:flow}.
Parametrize the constraint $z_1-z_2+z_3-z_4=0$ by
\begin{equation}
\label{eq:interaction}
\begin{split}
z_1=a+b+c,\qquad z_2=a+b-c, \qquad
z_3=a-b-c,\qquad z_4=a-b+c.
\end{split}
\end{equation}
Since $\phi(z)=(R_z^3+S_z^3)/(6\sqrt3)$ in view of \eqref{eq:RZ}, the scalar cubic identity
\begin{equation*}
(x+y+z)^3-(x+y-z)^3+(x-y-z)^3-(x-y+z)^3=24xyz\end{equation*}
implies that $\Omega$ in \eqref{eq:omega} can be written as
\begin{equation}
\label{eq:resonance}
\Omega=\frac4{\sqrt3}T(a,b,c),
\qquad
T(a,b,c)=R_aR_bR_c+S_aS_bS_c.
\end{equation}
The inverse map is
$a=(z_1+z_3)/2$, $b=(z_2-z_3)/2$, $c=(z_1-z_2)/2$.
Thus the discrete components of $a,b,c$ lie in $\frac12\Z$.
We extend $F$ by zero from $\R\times\Z$ to
$\R\times\frac12\Z$ and use counting measure on the latter lattice.
By nonnegativity, the discrete variables $a,b,c$ may be summed over
the full product $(\frac12\Z)^3$.

Therefore, to prove Theorem \ref{thm:main}, it suffices to prove the following estimate, with $C_0>0$ fixed.

\begin{proposition}
\label{prop:quartic}
For every nonnegative $F\in L^2(\R\times\Z)$, extended by zero as above,
\begin{equation}
\label{eq:quartic}
\mathcal I(F):=
\iiint\one_{\{|T(a,b,c)|\le C_0\}}
\prod_{j=1}^4F(z_j)\dd a\dd b\dd c
\lesssim\norm F_2^4,
\end{equation}
where $z_j$ are given by \eqref{eq:interaction}.
\end{proposition}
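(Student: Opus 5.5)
The plan is to reduce \eqref{eq:quartic} to uniform mixed-measure bounds via Cauchy--Schwarz in two different pairings of the four factors, and to verify those bounds with the semi-algebraic estimate \cite[Lemma 4.2]{DWWZ}. First localize all frequencies to unit strips in the discrete direction (width $K$) and decompose the $(a,b,c)$ region by dyadic sizes of $|R_a|,|S_a|,|R_b|,\dots$ together with the distances of $a,b,c$ to the three singular lines $\{R=0\}$, $\{S=0\}$, $\{R+S=0\}$ (equivalently $\{\xi=0\}$). The region where at least two of $a,b,c$ lie within $O(1)$ of a singular line is handled directly: there $T$ is essentially linear or degenerate in the remaining variable, and a crude Schur-type bound (counting the $O(1)$-thick sets, each of bounded mixed measure per unit box) gives $\lesssim\norm F_2^4$.

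\textbf{First pairing.} Fix $c$ and write $\mathcal I\le \int \big(\iint \one\, F(z_1)F(z_3)\cdots\big)\,dc$; by Cauchy--Schwarz in $(a,b)$ with the pair $(z_1,z_3)=(a+b+c,a-b-c)$ and $(z_2,z_4)$, since $z_1+z_3=2a$, $z_1-z_3=2(b+c)$ the map $(a,b)\mapsto(z_1,z_3)$ is measure-preserving up to constants. It then suffices to show
\[
\sup_{c,\,w}\ \big|\{v\in\R\times\tfrac12\Z:\ |Q_c(v)-w|\le C_0,\ v\in\text{strip piece}\}\big|_{\R\times\Z}\lesssim 1
\]
after appropriate variable change, where $Q_c(v)=T(c,v,v)=R_cR_v^2+S_cS_v^2$ (up to linear terms absorbed in $w$ after completing the square). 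When $R_cS_c<0$ this is a hyperbolic quadratic form; by the linear change $(R_v,S_v)$ and scaling by $|R_c|^{1/2},|S_c|^{1/2}$ it becomes the hyperbolic Schr\"odinger setting, so \cite[Proposition 2.4, (2.7)]{DFZ} bounds the Euclidean area by $O(1)$ uniformly, and the $\xi$-sections have length $\lesssim |R_c+S_c|^{-1/2}$, which is $O(1)$ away from the line $R_c+S_c=0$; \cite[Lemma 4.2]{DWWZ} then converts area plus section length into the mixed-measure bound. The elliptic case $R_cS_c>0$ is easier (annuli of area $\lesssim (R_cS_c)^{-1/2}$).

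\textbf{Second pairing.} On regions where $c$ is near $R_c+S_c=0$ (so the $\xi$-sections in the first pairing are long), instead fix $b$ and $s=a+c$, pairing $(z_1,z_3)$-type factors differently so that the constraint becomes $|T(a,b,s-a)|\le C_0$ as a quadratic in $a$, with $c=s-a$ varying. Here I would prove directly, by computing the quadratic form $a\mapsto T(a,b,s-a)$ (coefficients $-R_b,-S_b$ on $R_a^2,S_a^2$ plus linear terms in $s$), that its level sets have area $\lesssim1$ and $\xi$-sections of length $\lesssim1$ on the complementary region where $b$ is nonsingular, and then invoke \cite[Lemma 4.2]{DWWZ} again. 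The main obstacle is this second pairing: making the area bound uniform in $(b,s)$, since completing the square introduces $s$-dependent shifts and the form's rank depends on $R_b,S_b$; I would first try to reduce it, after completing the square, to the same normalized hyperbolic model and check that the dyadic decomposition keeps the section lengths bounded, summing the dyadic pieces via almost-orthogonality (each $z_j$ lies in a boundedly overlapping family of pieces).
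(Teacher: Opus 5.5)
There is a genuine gap, and it sits where the paper's main new idea enters. Your first pairing reduces to the uniform bound $\sup_{c,w}|\{v:|Q_c(v)-w|\le C_0\}|_{\R\times\Z}\lesssim1$. For regular $c$ with $R_cS_c<0$ this set is a full hyperbolic shell. In the variables $X=|R_c|^{1/2}R_v$, $Y=|S_c|^{1/2}S_v$ it is $\{|X^2-Y^2-\gamma|\lesssim1\}$, which has infinite area. Its mixed measure is also infinite: for large $|k_v|$ the $\xi_v$-section consists of two intervals of length $\sim(|R_cS_c|^{1/2}|k_v|)^{-1}$, and the sum over $k_v$ diverges logarithmically. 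Estimate (2.7) in \cite[Proposition 2.4]{DFZ} does not bound this shell; it bounds only its intersection with the region $|H(D,Z)|^2\le100|H(D)H(Z)|$.

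What is missing is the DFZ-type splitting of the interaction set into $\Aone$ and $\Atwo$, according to whether $|T(c,d,e)|^2\le100|Q_c(d)Q_c(e)|$. The first pairing should be used only on $\Aone$, pairing $F(d+c)F(d-c)$ against $F(e-c)F(e+c)$, so that $e=a-b$ is the fibre variable at fixed $(c,d)$. Your pair $(z_1,z_3)$ at fixed $c$ is a bijective image of $(a,b)$ and does not produce the fibre you then estimate. Dyadic localisation plus ``almost orthogonality'' does not replace the splitting: restricting $e$ to dyadic $(R_e,S_e)$-boxes does not localise $F(d\pm c)$, so the pieces just add back up to the divergent fibre.

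Consequently your second pairing is given the wrong job and rests on a false bound. In the paper, configurations with two singular vectors are disposed of first, so one may take $b,c$ regular. The second pairing (fix $b$ and $s=a+c$) then handles $\Atwo$; it is not a device for $c$ near $\{R_c+S_c=0\}$. Your claim that $\{a:|T(a,b,s-a)|\le C_0\}$ has bounded area for nonsingular $b$ fails when $R_bS_b<0$, since $T(a,b,s-a)=\frac14Q_b(s)-Q_b(a-s/2)$ is again a hyperbolic shell.

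What rescues the estimate is the $\Atwo$ inequality itself. Write it as $|A+C|\le\frac1{10}|A-C|+2|T|$ with $A=Q_c(a)$, $C=Q_c(b)$, and combine it with $A=(\lambda+\mu)T-\lambda\mu C$, where $\lambda=R_a/R_b$, $\mu=S_a/S_b$. This gives $|C|\lesssim(1+|\lambda|+|\mu|)/(1+|\lambda\mu|)$ off $\{\lambda\mu\in[1/2,2]\}$. Using $|R_c|,|S_c|>K$, it then forces $|\lambda-\mu|\lesssim\max(R_b^2,S_b^2)^{-1}$ when $|\lambda|+|\mu|$ is large, which confines $a$ to a set of bounded area. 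The near-hyperbola piece $\lambda\mu\in[1/2,2]$ is summed dyadically via \eqref{eq:switched-rectangle}.

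Finally, your ``crude Schur-type'' treatment of the two-singular region also fails when $a,b,c$ all have bounded $\xi$. If, say, $\xi_c=0$, then $Q_c(e)=4\sqrt3\,k_ck_e\xi_e$, and every fibre again has logarithmically divergent mixed measure; the other pairings behave the same way. The paper handles that region with an external input (Lemma~\ref{lem:bounded-strip}, via \cite[Proposition 3.3]{NK}). When $|\xi_c|>4K$ it needs Lemma~\ref{lem:strip-shell} with an interval separated from $0$.
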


Replacing $F$ by $F(z)+F(-z)$ increases its $L^2$ norm by at most a factor two. We may therefore assume $F$ is even. Permuting $a,b,c$ then leaves the integrand invariant, up to permutations and sign changes of the $z_j$. This symmetry will be used throughout.

We shall use the quadratic form
\begin{equation*}
Q_c(v)=T(c,v,v)=R_cR_v^2+S_cS_v^2.
\end{equation*}
In particular,
\begin{equation}
\label{eq:Q-identities}
Q_c(a+b)-Q_c(a-b)=4T(a,b,c),
\qquad
T(c,a+b,a-b)=Q_c(a)-Q_c(b).
\end{equation}

\subsection{Measure estimates}
We state the main tool of this paper.
\begin{lemma}[{\cite[Lemma 4.2]{DWWZ}}]
\label{lem:mixed}
If $E\subset\R^2$ is a bounded semi-algebraic set of bounded complexity, then
\begin{equation*}
|E|_{\R\times\Z}
\lesssim
|E|_{\R^2}
+\sup_{k\in\Z}|\{\xi:(\xi,k)\in E\}|.
\end{equation*}
The constant depends only on the number and degrees of the defining polynomial inequalities.
\end{lemma}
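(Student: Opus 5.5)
The plan is to decompose $E$ into boundedly many cylindrical cells and compare, on each cell, the sum over integer sections with the integral over real sections. Write $E_y=\{\xi:(\xi,y)\in E\}$ for $y\in\R$, so that
$$|E|_{\R\times\Z}=\sum_{k\in\Z}|E_k|,\qquad |E|_{\R^2}=\int_\R|E_y|\dd y.$$

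First, apply a cylindrical algebraic decomposition adapted to the projection $(\xi,y)\mapsto y$. I will use that the number of cells, and the degrees of the semi-algebraic functions involved, are bounded in terms of the number and degrees of the defining polynomials. This is the standard complexity bound, e.g.\ Collins' CAD or Basu--Pollack--Roy, and it is the only input on which the constant depends. After this step, $E$ is a disjoint union of boundedly many cells of three kinds:

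\begin{enumerate}
\item[(i)] Cells projecting to a single point $y_0$, namely points or vertical segments $\{y_0\}\times(\alpha,\beta)$.
\item[(ii)] Graph cells $\{(\xi,y):y\in I,\ \xi=f(y)\}$.
\item[(iii)] Band cells $\{(\xi,y):y\in I,\ f(y)<\xi<g(y)\}$.
\end{enumerate}

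Here $I$ is an open interval, and $f<g$ are continuous semi-algebraic functions on $I$ of bounded complexity.

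Next, dispose of the first two kinds. Cells of type (ii) have zero $\xi$-length on every section. A cell of type (i) meets at most one integer line $y=k$. There its contribution is at most $|E_k|\le\sup_k|E_k|$. Since there are boundedly many such cells, they contribute $\lesssim\sup_k|E_k|$ in total.

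The main step is the band cells. For such a cell put $h=g-f$ on $I$, which is a continuous semi-algebraic function of one variable. By monotonicity for one-variable semi-algebraic functions, $I$ splits into boundedly many subintervals $J$ on each of which $h$ is monotone. Suppose $h$ is nondecreasing on $J$, and let $k_1<\dots<k_m$ be the integers in $J$. For $i<m$ we have $[k_i,k_{i+1}]\subset J$, so
$$h(k_i)\le\int_{k_i}^{k_i+1}h\dd y .$$
This gives
$$\sum_{i}h(k_i)\le\int_J h\dd y+h(k_m),$$
and the nonincreasing case is symmetric, with $h(k_1)$ as the leftover term.

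The crucial point is that the leftover term is $h$ evaluated at an \emph{integer} point. It is the length of the section of the cell at that integer, hence at most $|E_{k_m}|\le\sup_k|E_k|$. This is better than a bound by $\sup_y h(y)$ over all real $y$, which would not be controlled by the right-hand side. Meanwhile $\int_J h\le |E|_{\R^2}$, because the cells are disjoint.

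Summing over the boundedly many cells and monotone pieces gives the lemma. I expect the only delicate point to be making sure that every boundary contribution is evaluated at an integer section, which the monotone-piece argument above handles. The rest is bookkeeping of the complexity bounds from the decomposition.
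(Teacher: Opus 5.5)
The paper gives no proof of this lemma. It is quoted from \cite[Lemma 4.2]{DWWZ} and used as a black box, with only the truncation remark for unbounded sets and the dilation $k\mapsto 2k$ added. So there is no in-paper argument to compare against. Your proposal supplies one, and it is correct.

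The argument runs as follows.
\begin{itemize}
\item You take a cylindrical decomposition along the discrete coordinate.
\item Points, horizontal segments and graph cells are disposed of directly.
\item For band cells $\{f(y)<\xi<g(y)\}$, you split $h=g-f$ into monotone pieces.
\item On each piece, the sum over the consecutive integers $k_1<\dots<k_m$ exceeds $\int_J h$ by at most the single term $h(k_1)$ or $h(k_m)$.
\item That term is the length of an \emph{integer} section of $E$, hence at most $\sup_k|E_k|$.
\end{itemize}
This last observation is exactly why only integer sections appear on the right-hand side. The resulting constant is (number of cells)$\times$(number of monotone pieces), which depends only on the number and degrees of the defining polynomials, as the statement asserts.

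Two small points should be made explicit in a write-up.
\begin{itemize}
\item The monotonicity theorem gives points $a_0<\dots<a_n$ with $h$ constant or strictly monotone on each open interval $(a_i,a_{i+1})$. Because $h$ is continuous on $I$, it stays monotone on the closed pieces. You can therefore take half-open pieces $[a_i,a_{i+1})\cap I$ covering $I$, so no integer is lost; a double count at a shared endpoint would only weaken the upper bound harmlessly.
\item The bound on $n$ must be uniform in the complexity. This is standard, for instance from the bounded-complexity version of the monotonicity theorem, or from B\'ezout applied to the polynomial relation satisfied by $h'$ via implicit differentiation of the root functions $f,g$.
\end{itemize}

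Finally, your argument barely uses boundedness of $E$. A band cell unbounded in $\xi$ has infinite area, and a band over an unbounded $y$-interval is handled by the same monotone comparison. So your argument yields directly the extension to unbounded semi-algebraic sets that the paper obtains by truncation.
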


The same estimate applies to unbounded semi-algebraic
sets by applying it to $E\cap[-N,N]^2$ and letting
$N\to\infty$; these truncations have uniformly bounded
complexity.
After the fixed dilation $k\mapsto2k$, we may also
use the estimate on $\R\times\frac12\Z$.

\medskip 

We then collect some elementary sublevel-set estimates
for quadratic forms.

\begin{lemma}
\label{lem:quadratic}
If $\alpha\ne0$, then, uniformly in $\beta,\gamma\in\R$,
\begin{equation}
\label{eq:quadratic}
|\{x:|\alpha x^2+\beta x+\gamma|\le1\}|
\lesssim|\alpha|^{-1/2}.
\end{equation}
For intervals $I,J$ of lengths $A,B$,
\begin{equation}
\label{eq:rectangle}
|\{(X,Y)\in I\times J:|X^2-Y^2-\gamma|\le1\}|
\lesssim1+\log(2+\min(A,B)).
\end{equation}
If $J$ is separated from zero, then also
\begin{equation}
\label{eq:away}
|\{(X,Y)\in\R\times J:|X^2-Y^2-\gamma|\le1\}|
\lesssim1+\log\left(1+\frac{|J|}{\dist(J,0)}\right).
\end{equation}
All bounds are uniform in the interval positions and in $\gamma$.
\end{lemma}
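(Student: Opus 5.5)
The statement to prove is Lemma~\ref{lem:quadratic}, which consists of three elementary one- and two-variable sublevel estimates. I would prove them in order, each reducing to the previous one.

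For \eqref{eq:quadratic}, divide by $\alpha$ and complete the square. The set becomes $\{x:|(x-x_0)^2-\gamma'|\le|\alpha|^{-1}\}$ with $x_0=-\beta/(2\alpha)$. Putting $t=x-x_0$ and $\delta=|\alpha|^{-1}$, it suffices to bound $|\{t:|t^2-\gamma'|\le\delta\}|\lesssim\delta^{1/2}$ uniformly in $\gamma'$. This set is contained in $\{t:\gamma'-\delta\le t^2\le\gamma'+\delta\}$, so in $s=|t|$ it is the interval $[\sqrt{(\gamma'-\delta)_+},\sqrt{\gamma'+\delta}]$, doubled. If $\gamma'\le2\delta$, its length is at most $\sqrt{3\delta}$. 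If $\gamma'>2\delta$, the mean value theorem gives length at most $2\delta/\sqrt{\gamma'-\delta}\le2\sqrt\delta$.

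For \eqref{eq:rectangle}, rotate by $45^\circ$: set $u=X-Y$ and $v=X+Y$, so that $X^2-Y^2=uv$ and $\dd X\dd Y=\tfrac12\dd u\dd v$. By symmetry assume $A\le B$. Fubini in $X$ is the cleaner route, so I would use it. For fixed $Y$, \eqref{eq:quadratic} (or the explicit computation above with $\delta=1$) shows that the $X$-section $\{X:|X^2-(Y^2+\gamma)|\le1\}$ has length $\lesssim\min\bigl(1,(Y^2+\gamma)_+^{-1/2}\bigr)$ when $Y^2+\gamma\ge2$, and $\lesssim1$ otherwise. It is also trivially at most $A$, since $X\in I$.

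Integrating over $Y\in J$ then requires controlling $\int_J\min\bigl(A,1,|Y^2+\gamma|^{-1/2}\bigr)\dd Y$. This is the main obstacle, because a naive bound grows with $B$. I would instead exchange roles and bound the $Y$-section for fixed $X$ by $\min\bigl(B,|X^2-\gamma|_+^{-1/2}+\dots\bigr)$, integrating over $X\in I$ with $|I|=A$ the shorter side. Concretely, the $Y$-section is the set where $|Y^2-w|\le1$ with $w=X^2-\gamma$, and it has length $\lesssim\min(1,|w|^{-1/2})$ when $|w|\ge2$. The integral $\int_I\min\bigl(1,|X^2-\gamma|^{-1/2}\bigr)\dd X$ over an interval of length $A$ is then maximized in the degenerate case $\gamma\approx0$, where it is $\int_0^A\min(1,1/X)\dd X\approx1+\log(1+A)$. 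For $\gamma>0$, the singularities near $X=\pm\sqrt\gamma$ are of the form $|X-\sqrt\gamma|^{-1/2}\gamma^{-1/4}$. These integrate to $O(1)$ on the region where $|X-\sqrt\gamma|\lesssim\sqrt\gamma$, while the tail behaves like $1/|X|$, which gives the logarithm in the shorter length $A$. Taking the roles of $X$ and $Y$ symmetric (with $\gamma\mapsto-\gamma$) lets me use $\min(A,B)$.

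For \eqref{eq:away}, integrate in $X$ first over all of $\R$. For fixed $Y\in J$, the $X$-section has length $\lesssim\min\bigl(1,(Y^2+\gamma)^{-1/2}\bigr)$, or $\lesssim1$ when $Y^2+\gamma$ is near zero. I would split $J$ into dyadic pieces relative to $d=\dist(J,0)$. After reflecting, $J\subset[d,d+|J|]$. The $X$-integral is at most $\min\bigl(1,|Y^2+\gamma|^{-1/2}\bigr)$, and since $Y\ge d>0$, the map $Y\mapsto Y^2$ has Jacobian $2Y\ge2d$. Substituting $w=Y^2+\gamma$ bounds the contribution by $\int\min(1,|w|^{-1/2})\,\dd w/(2\sqrt{w-\gamma})$ over the image of $J$.

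A cleaner way to finish is to treat $J\subset[2^jd,2^{j+1}d]$ as a dyadic shell. On that shell, $\dd Y=\dd w/(2Y)$ with $Y\sim2^jd$, and $\int\min(1,|w|^{-1/2})\dd w$ over an interval of length $\sim(2^jd)^2$ is $\lesssim2^jd$. Each shell therefore contributes $O(1)$. There are $\lesssim1+\log\bigl(1+|J|/d\bigr)$ shells, which gives \eqref{eq:away}.
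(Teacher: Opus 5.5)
Your proof is correct and follows the route the paper indicates. For \eqref{eq:quadratic} you complete the square instead of quoting the van der Corput sublevel-set lemma, which is the same fact specialised to quadratics; for \eqref{eq:rectangle} and \eqref{eq:away} you bound each section by $\lesssim(1+|w|)^{-1/2}$ and integrate over the shorter side, respectively over dyadic shells $[2^jd,2^{j+1}d]$ in $Y$, which is exactly the ``elementary integration'' whose details the paper omits. When writing it up, drop the unused $45^\circ$ rotation, and replace the heuristic that $\gamma\approx0$ is extremal by the one-line observation that $|X^2-\gamma|\ge X^2$ when $\gamma\le0$; your singularity analysis already covers $\gamma>0$.
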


\begin{proof}
The one-dimensional estimate \eqref{eq:quadratic} is a
standard consequence of the van der Corput sublevel-set
inequality; see, for example, \cite[Lemma 1.2]{CCW}.
The remaining estimates \eqref{eq:rectangle} and \eqref{eq:away} follow from elementary integration.
A similar argument was employed in the proof of
\cite[Theorem 3.1]{BSTW25}.
We omit the details here.
\end{proof}

% Every estimate in Lemma \ref{lem:quadratic} also holds for a different fixed sublevel thickness, with an additional dependence on that thickness.

\begin{lemma} 
\label{lem:strip-shell}
Fix $\kappa,d,\ell>0$. Suppose $|p|,|q|\ge\kappa$, and let $I$ be an interval with $|I|\le\ell$ and $\dist(I,0)\ge d$. Uniformly in $\gamma\in\R$,
\begin{equation}\label{eq:strip}
\left|\left\{(\xi,k):\xi\in I,\
\left|p(\sqrt3\xi+k)^2+q(\sqrt3\xi-k)^2-\gamma\right|\le1
\right\}\right|
\lesssim_{\kappa,d,\ell}1.
\end{equation}
The bound holds for both Euclidean area and mixed $\R\times\Z$ measure.
\end{lemma}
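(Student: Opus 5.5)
The plan is to prove the Euclidean bound by slicing in $\xi$ and to obtain the mixed bound from Lemma~\ref{lem:mixed}. Write $E$ for the set in \eqref{eq:strip}. It is cut out by a bounded number of polynomial inequalities of bounded degree, namely $\xi\in I$ and $|P(\xi,k)-\gamma|\le 1$, so it is semi-algebraic of bounded complexity. Lemma~\ref{lem:mixed} then gives
\begin{equation*}
|E|_{\R\times\Z}\lesssim |E|_{\R^2}+\sup_{k}|\{\xi\in I:(\xi,k)\in E\}|.
\end{equation*}
The second term is at most $|I|\le\ell$. It therefore suffices to bound the Euclidean area.

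For the area, fix $\xi\in I$ and view the defining quantity as a quadratic polynomial in $k$:
\begin{equation*}
P(\xi,k)=p(\sqrt3\xi+k)^2+q(\sqrt3\xi-k)^2=(p+q)k^2+2\sqrt3(p-q)\xi k+3(p+q)\xi^2.
\end{equation*}
If $|p+q|\ge\kappa$, say, then \eqref{eq:quadratic} gives a $k$-slice of length $\lesssim|p+q|^{-1/2}\le\kappa^{-1/2}$. Integrating over $\xi\in I$ yields area $\lesssim\ell\kappa^{-1/2}$.

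The main obstacle is the degenerate case where $p+q$ is small, which includes the hyperbolic case $p\approx -q$. Here I will instead slice in $\xi$ for fixed $k$, or better, use the structure in both variables. The coefficient of $\xi^2$ is $3(p+q)$, which is also small, so slicing in $\xi$ does not help directly. Instead, the dominant term is the cross term $4\sqrt3\, p\,\xi k$. Indeed, when $|p+q|<\kappa$ and $|p|,|q|\ge\kappa$, the two have opposite signs and $|p-q|\ge|p|+|q|-|p+q|\ge\kappa$.

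For fixed $\xi$ with $|\xi|\ge d$, the $k$-derivative of $P$ is
\begin{equation*}
\partial_kP=2(p+q)k+2\sqrt3(p-q)\xi.
\end{equation*}
This vanishes only at one point $k_0(\xi)$. Away from $k_0$, the function is monotone, with derivative of size $\gtrsim |p+q|\,|k-k_0|$.

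A cleaner route is to use \eqref{eq:quadratic} in $k$ with coefficient $|p+q|$, combined with the linear sublevel bound. The set $\{k:|P-\gamma|\le1\}$ has measure at most
\begin{equation*}
\min\bigl(|p+q|^{-1/2},\ (\text{linear bound})\bigr).
\end{equation*}
A quadratic $\alpha k^2+\beta k+c$ with $|\beta|\ge\beta_0$ has sublevel set of measure $\lesssim \beta_0^{-1}$ outside the interval $\{|k-k_0|\le |\beta|/(2|\alpha|)\cdot\tfrac12\}$, and $\lesssim|\alpha|^{-1/2}$ overall.

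This suggests the following alternative, which I will adopt if the direct bound fails. Change variables to $X=\sqrt{|p|}(\sqrt3\xi+k)$ and $Y=\sqrt{|q|}(\sqrt3\xi-k)$, with Jacobian $\sim\sqrt{|pq|}$, which is bounded below by $\kappa$. In the indefinite case $P-\gamma$ becomes $X^2-Y^2-\gamma$. The constraint $\xi\in I$ becomes a strip in the $(X,Y)$-plane, of width $\lesssim\ell$ in the direction $X/\sqrt{|p|}+Y/\sqrt{|q|}$ and at distance $\gtrsim d$ from the origin. Along the hyperbola $X^2-Y^2=\gamma'$, the region $|X^2-Y^2-\gamma|\le1$ has width about $1/\sqrt{X^2+Y^2}$, and its intersection with a transversal strip has area $\lesssim \ell$. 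The only exception is where the strip is tangent to, or runs along, an asymptote; there the log bounds \eqref{eq:rectangle} and \eqref{eq:away} apply, with $\dist\ge d$ and length $\le\ell$ giving an $O(1)$ bound.

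In the definite case, when $p,q$ have the same sign, $|p+q|\ge 2\kappa$ automatically, and the first argument suffices. Since $p,q$ are not assumed bounded above, I will rescale $\gamma$ and use $|p|,|q|\ge\kappa$ only, so that all constants depend on $\kappa,d,\ell$ alone.
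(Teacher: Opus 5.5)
Your reduction of the mixed bound to the Euclidean one via Lemma~\ref{lem:mixed} matches the paper. Your treatment of $|p+q|\ge\kappa$ (slice in $k$ at fixed $\xi$, apply \eqref{eq:quadratic} with $\alpha=p+q$) is correct, and even simpler than the paper's, since it handles the elliptic case and the well-conditioned hyperbolic case at once without using $\dist(I,0)\ge d$.

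The gap is the regime $0<|p+q|<\kappa$, where $pq<0$ and $|p|\approx|q|$. This is exactly where $\dist(I,0)\ge d$ has to enter, and neither of your sketched routes is carried out there. At fixed $\xi$,
\[
P=(p+q)\bigl(k-k_0(\xi)\bigr)^2+V(\xi),\qquad k_0(\xi)=-\frac{\sqrt3(p-q)}{p+q}\,\xi,\qquad V(\xi)=\frac{12pq}{p+q}\,\xi^2 .
\]
Since $k$ is unrestricted, for $\gamma$ near $V(\xi)$ the $k$-slice has length $\sim|p+q|^{-1/2}$, which blows up as $p+q\to0$. Your linear sublevel bound only controls the part of the slice far from $k_0$. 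So no per-slice bound uniform in $\xi$ exists, and you must integrate in $\xi$, using that $V(\xi)$ moves quickly.

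Your alternative coordinates $X=\sqrt{|p|}R$, $Y=\sqrt{|q|}S$ do not fix this as written, for three reasons. First, the strip becomes $\{X/\sqrt{|p|}+Y/\sqrt{|q|}\in2\sqrt3 I\}$, whose direction $(\sqrt{|p|},-\sqrt{|q|})$ is nearly parallel to the asymptote $Y=-X$ precisely when $|p+q|$ is small; the configuration you call exceptional is therefore the whole case. Second, the strip's width is $2\sqrt3|I|\,(|p|^{-1}+|q|^{-1})^{-1/2}$, which is not $O(\ell)$ uniformly in $p,q$; only the width-to-distance ratio is controlled. Third, \eqref{eq:rectangle} and \eqref{eq:away} concern axis-parallel sets for $X^2-Y^2$, so they do not apply to this slanted strip.

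The missing idea is to diagonalise so that $\xi$ itself is a coordinate, i.e.\ to complete the square in $k$. Set $X=|p+q|^{1/2}\bigl(k+\frac{\sqrt3(p-q)}{p+q}\xi\bigr)$ and $Y=\bigl|\frac{12pq}{p+q}\bigr|^{1/2}\xi$. Then the form is $\pm X^2\pm Y^2$ and the inverse Jacobian is $(12|pq|)^{-1/2}\le(12\kappa^2)^{-1/2}$. The strip becomes $\R\times J$ with $|J|/\dist(J,0)=|I|/\dist(I,0)\le\ell/d$, a scale-invariant ratio. In the hyperbolic case \eqref{eq:away} then gives $O(1+\log(1+\ell/d))$ uniformly in $p,q$, and in the elliptic case the shell has area at most $2\pi$.

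In your slicing language, the same fix reads as follows: the $k$-slice is $\lesssim|p+q|^{-1/2}\min\{1,|V(\xi)-\gamma|^{-1/2}\}$, and the substitution $Y=|12pq/(p+q)|^{1/2}\xi$ turns the $\xi$-integral into the one behind \eqref{eq:away}.

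The endpoint $p+q=0$ must be treated separately, as the paper does. There $P=4\sqrt3\,p\xi k$, the $k$-slice has length $\lesssim(|p||\xi|)^{-1}$, and the area is at most $\ell/(2\sqrt3\kappa d)$. Your remark on the dominant cross term covers this endpoint, but not the nearby regime $0<|p+q|<\kappa$.
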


\begin{proof}
If $p+q=0$, the quadratic expression is $4\sqrt3p\xi k$. For each $\xi\in I$, its $k$-section has length $O((|p|\,|\xi|)^{-1})$. Hence its Euclidean area is at most
\begin{equation*}
\frac1{2\sqrt3|p|}\int_I\frac{\dd\xi}{|\xi|}
\le\frac{\ell}{2\sqrt3\kappa d},
\end{equation*}
which proves the Euclidean area bound \eqref{eq:strip} in this case.

If $p+q\ne0$, complete the square:
\begin{equation*}
\begin{split}
p(\sqrt3\xi+k)^2+q(\sqrt3\xi-k)^2
={}&(p+q)\left(k+\frac{\sqrt3(p-q)}{p+q}\xi\right)^2 +\frac{12pq}{p+q}\xi^2.
\end{split}
\end{equation*}
Make the change of variables
\begin{equation*}
X=|p+q|^{1/2}\left(k+\frac{\sqrt3(p-q)}{p+q}\xi\right),
\qquad
Y=\left|\frac{12pq}{p+q}\right|^{1/2}\xi.
\end{equation*}
The inverse Jacobian is $(12|pq|)^{-1/2}$. The transformed form is $\pm X^2\pm Y^2$. In the elliptic case its unit-thickness shell has area at most $2\pi$. In the hyperbolic case the $Y$-interval has length-to-distance ratio at most $\ell/d$, and \eqref{eq:away} applies. 
Then \eqref{eq:strip} with $\R^2$ measure follows by restoring the Jacobian.

At fixed original coordinate $k$, the continuous section lies in $I$ and has length at most $\ell$. Lemma \ref{lem:mixed} proves the mixed bound.
\end{proof}

\section{Proof of the main theorem}
\label{sec:strips}

This section is devoted to the proof of Theorem \ref{thm:main}.
Fix a sufficiently large constant $K\ge10$, and define
\begin{equation*}
\Gamma_R=\{|R|\le K\},\qquad
\Gamma_S=\{|S|\le K\},\qquad
\Gamma_0=\{|\xi|\le K\}.
\end{equation*}
\begin{definition}\label{DEF:reg}
A vector $v$ is called \emph{regular} if
\begin{equation*}
|R_v|>K,\qquad |S_v|>K,\qquad
|R_v+S_v|>2\sqrt{3}K.
\end{equation*}
Otherwise, $v$ is called \emph{singular}.
\end{definition}

We begin by bounding the contribution to $\mathcal I(F)$
from interactions with at least two singular vectors.
By symmetry, we may take these vectors to be $a$ and $b$.

\medskip

We first record a consequence of an existing estimate on a bounded continuous-frequency strip.

\begin{lemma}
\label{lem:bounded-strip}
For fixed $K_1$, if $F$ is supported in $\{|\xi|\le K_1\}$, then
$\mathcal I(F)\lesssim_{K_1,C_0}\norm F_2^4$.
\end{lemma}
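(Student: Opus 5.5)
Lemma~\ref{lem:bounded-strip} says that if $F$ is supported in a bounded continuous-frequency strip $\{|\xi|\le K_1\}$, then the quartic form $\mathcal I(F)$ is controlled by $\norm F_2^4$. The text calls it ``a consequence of an existing estimate''. So the plan is to reduce it to a known $L^4$ Strichartz estimate for functions that are frequency-localized in $\xi$ to a fixed bounded set. Concretely, if $F$ is supported in $|\xi|\le K_1$, then all $z_j$ have $|\xi_j|\le K_1$. We then bound $\mathcal I(F)$ without using the resonance constraint beyond $|T|\le C_0$ in a simple way, or discard it entirely.

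The first attempt is to drop the indicator and bound the full convolution directly. Write $\mathcal I(F)\le \int (F*F)(w)\,(F*F)(w)\,dw$ in the $(z_1+z_3, z_2+z_4)$ pairing. By Cauchy--Schwarz this becomes $\norm{F*F}_{L^2(\R\times\Z)}^2$, where the convolution is over $\R\times\Z$. Using the support in $|\xi|\le K_1$, for each fixed $w=(\eta,m)$ the convolution $F*F(w)=\sum_k\int F(\xi,k)F(\eta-\xi,m-k)\,d\xi$ is an integral over a bounded $\xi$-range but an unbounded sum over $k$. Cauchy--Schwarz then gives $|F*F(w)|^2\le |\{\text{support}\}|\cdot (F^2*F^2)(w)$, and the support size is infinite in $k$. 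So the constraint must be used to restrict $k$.

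The second step is to use the constraint to restrict $k$. With $\xi$ bounded, $\phi(\xi,k)=\xi^3+\xi k^2$, so the resonance function in the $(z_1,z_3)\mapsto(z_2,z_4)$ pairing is essentially $\xi$-weighted quadratic in $k$. Fix the sum $w=z_1+z_3$ and the total phase. The set of $z_1=(\xi,k)$ with $|\phi(z_1)+\phi(w-z_1)-\tau|\le C$ is a sublevel set of $\xi k^2+(\eta-\xi)(m-k)^2+\dots$ over $|\xi|\le K_1$. We estimate its mixed measure uniformly in $(w,\tau)$ via Lemma~\ref{lem:mixed}, combining Lemma~\ref{lem:quadratic} in $k$ (coefficient $\eta=\xi_1+\xi_3$ in front of $k^2$ after expansion) with the bounded $\xi$-sections. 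The standard $TT^*$/bilinear argument (as in \eqref{eq:fourier-reduction}, with $\tau$-localization from $\chi$) then gives $\mathcal I(F)\lesssim \sup_{w,\tau}|E_{w,\tau}|_{\R\times\Z}\,\norm F_2^4$.

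The main obstacle is uniformity. The quadratic coefficient $\eta$ can be small, and then the $k$-sections are not short. When $|\eta|\lesssim 1$ the form degenerates to something linear in $k$ with coefficient $\sim \xi_1 m$-type terms, so this case may need a further subdivision. Alternatively, we can invoke directly the known semiclassical/bounded-frequency result: on a bounded $\xi$-strip, $U(t)$ behaves like the Schr\"odinger-type flow $e^{it\xi\partial_y^2}$. The sharp $L^4$ estimate for the hyperbolic/elliptic Schr\"odinger flow on $\R\times\T$ (\cite{DFZ}, or Takaoka--Tzvetkov \cite{TT01}) then applies after a change of variables. I would try the direct measure estimate first and fall back on citing \cite{DFZ,TT01} for the degenerate small-$\eta$ regime, with constants depending on $K_1$ and $C_0$.
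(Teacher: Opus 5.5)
There is a genuine gap: neither route you describe proves the lemma. The direct fibre estimate is not just hard to make uniform; it is false. Take $w=z_1+z_3=(\eta,0)$ and $z=(\xi,k)$. Since $\phi$ is odd, $\phi(z)+\phi(w-z)=\eta\,(k^2+3\xi^2-3\eta\xi+\eta^2)$. Hence for $\tau=0$ and $|\eta|$ small (depending on $K_1,C_0$), the fibre $E_{w,0}$ contains every $(\xi,k)$ with $|\xi|\le K_1/2$ and $k^2\le C_0/(2|\eta|)$. Its mixed measure is therefore $\gtrsim K_1(C_0/|\eta|)^{1/2}$. This is exactly the $|\eta|^{-1/2}$ your own $k$-section bound produces, and it is unbounded on a set of $w$ of positive measure. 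The other pairings degenerate in the same way, e.g.\ $\phi(z+(\gamma,0))-\phi(z)=\gamma(k^2+3\xi^2+3\gamma\xi+\gamma^2)$. So no bound of $\mathcal I(F)$ by an essential supremum of fibre measures can work here, and you do not say what ``further subdivision'' would replace it.

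The fallback is not valid either. On $|\xi|\le K_1$ the phase is $\xi^3+\xi k^2$, whose $k^2$-coefficient is the variable $\xi$ itself and vanishes on $\xi=0$. No change of variables turns it into the constant-coefficient phase $\xi^2\pm k^2$ on $\R\times\Z$, so \cite{TT01} and \cite{DFZ} do not apply.

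The missing ingredient is a ZK-specific estimate that is lossless on a bounded strip. The paper uses \cite[Proposition 3.3]{NK}, $\norm{U(t)f}_{L^4([-1,1]\times\R\times\T)}\lesssim\norm{\langle D_x\rangle^{1/4}f}_2$, which is $\lesssim_{K_1}\norm f_2$ when $\widehat f$ is supported in $|\xi|\le K_1$.

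It then runs the Fourier reduction backwards, a step your proposal also lacks, since you only invoke the forward reduction. Take $\widehat f=F\ge0$ and a Schwartz weight $\psi\ge0$ with $\widehat\psi\ge0$ and $\widehat\psi\ge1$ on $|\omega|\le4C_0/\sqrt3$. Every term in the expansion of $\int\psi(t)\norm{U(t)f}_{L^4_{x,y}}^4\dd t$ is then nonnegative. Since $\Omega=\frac4{\sqrt3}T$, it dominates $\one_{\{|T|\le C_0\}}\prod_jF(z_j)$. This gives $\mathcal I(F)\lesssim\int\psi(t)\norm{U(t)f}_{L^4_{x,y}}^4\dd t\lesssim_{K_1,C_0}\norm F_2^4$.
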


\begin{proof}
By \cite[Proposition 3.3]{NK}, we have
\begin{align}\label{eq:L4cut}
\|U(t)f\|_{L^4([-1,1]\times\R\times\T)}
\lesssim \|\langle D_x\rangle^{1/4}f\|_2 \lesssim C_{K_1}\|f\|_2,
\end{align}
provided that $\widehat f$ is supported in $|\xi|\le K_1$.

Choose a nonnegative Schwartz function $\eta$ such that
$\widehat\eta\ge0$ and $\widehat\eta(\omega)\ge1$
for $|\omega|\le4C_0/\sqrt3$.
Taking $\widehat f=F\ge0$, by the Fourier reduction in Subsection \ref{SUB:FR} and \eqref{eq:L4cut} we get
\[
\mathcal I(F)\lesssim
\int_{\R}\eta(t)\|U(t)f\|_{L^4_{x,y}}^4\,dt
\lesssim_{K_1,C_0}\|F\|_2^4,
\]
which completes the proof.
\end{proof}

\subsection{At least two singular vectors}

In this subsection, we consider the contribution to $\mathcal I(F)$ from interactions with at least two singular vectors.

\begin{proposition}
\label{prop:strips}
We have
\begin{align*}
\iiint_{\mathcal S} \one_{\{|T(a,b,c)|\le C_0\}}
\prod_{j=1}^4F(z_j)\dd a\dd b\dd c \les_K \norm F_2^4,
\end{align*}
where $\mathcal S$ is the set where at least two of the vectors \(a,b,c\) are singular.
\end{proposition}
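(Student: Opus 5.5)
We will bound the integral over $\mathcal S$ by Cauchy--Schwarz, pairing the four Fourier factors so that each pair is parametrised by two fixed vectors and one free singular vector. The measure of the free set must then be bounded uniformly. By the symmetry noted after Proposition~\ref{prop:quartic}, we may assume $a$ and $b$ are singular. A singular vector lies in $\Gamma_R$, $\Gamma_S$ or $\Gamma_0$, because $|R_v+S_v|\le2\sqrt3K$ is exactly $|\xi_v|\le K$. This gives finitely many type pairs for $(a,b)$, and we treat each separately.

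\emph{Main pairing.} Fix $b$ and $s=a+c$, so that $z_1,z_4$ (resp.\ $z_2,z_3$) are functions of $(b,s)$ and the free variable is $a$, with $c=s-a$. Cauchy--Schwarz gives
\begin{equation*}
\iiint_{\mathcal S}\one_{\{|T|\le C_0\}}\prod_{j=1}^4F(z_j)
\le\Big(\sup_{b,s}|E_{b,s}|\Big)\,\norm F_2^4,
\end{equation*}
where $E_{b,s}=\{a \text{ singular}: |T(a,b,s-a)|\le C_0\}$ and the supremum runs over singular $b$ of the fixed type. Since
\begin{equation*}
T(a,b,s-a)=T(a,b,s)-Q_b(a),
\end{equation*}
the set $E_{b,s}$ is a unit sublevel set of $R_b(R_a^2-R_aR_s)+S_b(S_a^2-S_aS_s)$, with $a$ restricted to a strip. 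This set is semi-algebraic of bounded complexity, so Lemma~\ref{lem:mixed} reduces the mixed measure to its Euclidean area plus the maximal $\xi$-section. Both quantities are handled in the coordinates $(R_a,S_a)$.

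\emph{Strip cases.} If $a\in\Gamma_R$, then $R_a$ ranges over an interval of length $2K$. For fixed $R_a$, the condition in $S_a$ is a quadratic with leading coefficient $S_b$, so \eqref{eq:quadratic} gives sections of length $\lesssim|S_b|^{-1/2}$. This suffices when $|S_b|\gtrsim1$. The cases $a\in\Gamma_S$ and $|R_b|\gtrsim1$ are symmetric. If $a\in\Gamma_0$, then $\xi_a$ lies in a bounded interval, and we use Lemma~\ref{lem:strip-shell} with $p=R_b$ and $q=S_b$ after completing the square in the shifts; the piece of $\Gamma_0$ near $\xi_a=0$ is handled by \eqref{eq:quadratic} in $k$. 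Whenever both coefficients $|R_b|$ and $|S_b|$ are $\ll1$, we instead use the crude bound $|T|\le C_0$. The degenerate subcases are where the coefficient in front of the unbounded coordinate of the strip is small: for instance $a\in\Gamma_S$ with $|R_b|\ll1$, which happens when $b\in\Gamma_R$ and $a\in\Gamma_S$. In those subcases we swap the roles of $a$ and $b$ (fix $a$ and $s'=b+c$, free variable $b$), which is legitimate by symmetry. The coefficient of the free coordinate is then $R_a$ or $S_a$, and in the subcases listed here it is not simultaneously small.

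\emph{Main obstacle.} The genuinely hard subcase is when $a$ and $b$ lie in the same strip, say both in $\Gamma_R$ with $R_b$ small. Then $Q_b(a)\approx S_bS_a^2$ is nondegenerate in $S_a$ only with coefficient $|S_b|$, which may be large, while $R_a$ is bounded; the bound $|S_b|^{-1/2}$ is fine there. If, however, both $R_b$ and $S_b$ are small, then $b$ lies in a bounded set. For this I would first try to show that then all of $z_j$ lie in $\{|\xi|\le K_1\}$ up to the contribution of $c$. If that fails, I would perform a third pairing fixing $c$ (the parameter in $Q_c$), with free variable $a$ constrained to the bounded set. The fallback, when $c$ is also singular and bounded, is to invoke Lemma~\ref{lem:bounded-strip} after localising $F$ to $\{|\xi|\le K_1\}$ with $K_1\sim K$. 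Checking that these finitely many subcases exhaust $\mathcal S$, with constants depending only on $K$ and $C_0$, is the step I expect to require the most care.
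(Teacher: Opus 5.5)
Your main pairing (fix $b$ and $s=a+c$, free singular $a$, sublevel set in $(R_a,S_a)$) is the one the paper uses. It does close the same-strip case $a,b\in\Gamma_R$ with $|S_b|>1$, and the case $a,b\in\Gamma_0\setminus(\Gamma_R\cup\Gamma_S)$ with $|\xi_s|$ large.

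\textbf{Distinct strips.} Your treatment rests on a false claim: after swapping $a$ and $b$, the relevant coefficients \emph{can} be small simultaneously. Take $a\in\Gamma_S$ and $b\in\Gamma_R$ near $S_a=0$, $R_b=0$.
\begin{itemize}
\item If $b$ is fixed with $R_b=0$, then $T(a,b,s-a)=S_bS_a(S_s-S_a)$ does not depend on $R_a$. So the fibre in $a\in\Gamma_S$ is a union of whole lines $\{S_a=\mathrm{const}\}$.
\item If $a$ is fixed with $S_a=0$, then $T(a,b,s'-b)=R_aR_b(R_{s'}-R_b)$ does not depend on $S_b$, and the same thing happens.
\end{itemize}
So neither of your pairings, used on this region, has a finite supremum. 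The missing idea is that the phase is not needed here at all. Use $\prod_jF(z_j)\le\frac12F(d+c)^2F(d-c)^2+\frac12F(e+c)^2F(e-c)^2$ with $d=a+b$, $e=a-b$, and fix $(d,c)$. The free $a$ then lies in $\Gamma_i\cap(d-\Gamma_j)$, a parallelogram of area $O_K(1)$ with $\xi$-sections of length $O_K(1)$, because the normals $(\sqrt3,\pm1)$ and $(1,0)$ of the three strips are pairwise independent. Lemma~\ref{lem:mixed} then gives the mixed bound.

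\textbf{Bounded $b$.} The subcase you flag as the main obstacle is easy. For fixed $b$, the substitution $(v,w)=(a+c,a-c)$ gives $\iint\prod_jF(z_j)\dd a\dd c\le\bigl(\int F(v+b)F(v-b)\dd v\bigr)^2\le\norm F_2^4$. Then integrate over a $b$-set of measure $O_K(1)$.

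\textbf{Both $a,b\in\Gamma_0$.} This is the second genuine gap. After completing the square, the interval in Lemma~\ref{lem:strip-shell} is the range of $\xi_a-\xi_s/2$, not of $\xi_a$. So that lemma requires $|\xi_s|\gtrsim K$. Your fallback via \eqref{eq:quadratic} in $k$ also fails: the coefficient of $k^2$ in $Q_b$ is $R_b+S_b=2\sqrt3\,\xi_b$, which vanishes for $b\in\Gamma_0$ with $\xi_b=0$.

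In fact neither of your pairings gives a finite fibre there. Take $b=(0,k_b)$ with $|k_b|>K$ and $s=0$. Then $T(a,b,-a)=-4\sqrt3\,k_b\xi_ak_a$, and $\{a:|\xi_a|\le K,\ |4\sqrt3\,k_b\xi_ak_a|\le C_0\}$ has infinite $\R\times\frac12\Z$ measure, since the sum over $k_a$ is harmonic. The swapped pairing is symmetric. Nearby $(b,s)$ give logarithmically large fibres even if you also impose that $c=s-a$ is regular.

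The paper splits on $|\xi_c|$:
\begin{itemize}
\item If $|\xi_c|\le4K$, all four $z_j$ satisfy $|\xi|\le6K$, and Lemma~\ref{lem:bounded-strip} applies. This is the external input from Nowicki-Koth.
\item If $|\xi_c|>4K$, then $|\xi_s|>3K$, so $\dist(I_s,0)\gtrsim K$ and Lemma~\ref{lem:strip-shell} closes.
\end{itemize}
The region $|\xi_c|\le4K$ contains regular $c$, for instance $K<|\xi_c|\le4K$ with $|k_c|$ large. Your fallback invokes Lemma~\ref{lem:bounded-strip} only when $c$ is singular and bounded, so it does not cover this region.
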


\begin{proof}
We consider three cases.

\noindent\textbf{Case 1.}
Let $\Gamma_i$ and $\Gamma_j$ be two distinct strips
among $\Gamma_R$, $\Gamma_S$ and $\Gamma_0$.
Here we consider the case that $a\in\Gamma_i$ and $b\in\Gamma_j$,
where $i\neq j$. Set $d=a+b$ and $e=a-b$.
By Young's inequality, we have
\begin{equation}
\label{eq:case1-pairing}
\begin{aligned}
\prod_{j=1}^4 F(z_j)
\leq {}&
\frac12 F(d+c)^2F(d-c)^2 +\frac12 F(e+c)^2F(e-c)^2.
\end{aligned}
\end{equation}

We first consider the contribution of the first term
in \eqref{eq:case1-pairing}. Since the integrand is
nonnegative, we may discard the phase restriction
$|T(a,b,c)|\leq C_0$.
Changing variables from $(a,b,c)$ to $(a,d,c)$,
with $b=d-a$, it suffices to estimate
\begin{equation*}
\iint F(d+c)^2F(d-c)^2
\left(
\int \mathbf{1}_{\Gamma_i}(a)
\mathbf{1}_{\Gamma_j}(d-a)\dd a
\right)\dd d\dd c.
\end{equation*}
Thus, it is enough to show that, uniformly in $d$,
\begin{equation}
\label{eq:case1-fibre}
\left|
\left\{
a:a\in\Gamma_i,\ d-a\in\Gamma_j
\right\}
\right|_{\R\times\frac12\Z}
\lesssim_K 1.
\end{equation}

To prove \eqref{eq:case1-fibre}, recall that the three
strip-defining linear functions are
\[
R_a=\sqrt3\xi_a+k_a,\qquad
S_a=\sqrt3\xi_a-k_a,\qquad
\xi_a.
\]
Their gradients in the $(\xi_a,k_a)$-plane are
$(\sqrt3,\pm 1)$ and
$(1,0)$, 
and any two of these vectors are linearly independent.
Since there are only three possible pairs, their determinants
are bounded away from zero. Thus the intersection of any two
translates of distinct strips has uniformly bounded Euclidean area.

For instance, suppose that $a\in\Gamma_R$ and
$d-a\in\Gamma_S$. Then it follows that $|R_a|\leq K$ and $|S_d-S_a|\leq K$.
In the coordinates $(R_a,S_a)$, this region is
contained in a rectangle of side lengths $2K$
and $2K$. Since
\begin{align}\label{eq:det}
\left|
\det\frac{\partial(R_a,S_a)}
{\partial(\xi_a,k_a)}
\right|=2\sqrt3,
\end{align}
its Euclidean area $ |
\{a:a\in\Gamma_R,\ d-a\in\Gamma_S\}
 |_{\R^2} $ is bounded by $C_K$,
uniformly in $d$. The other pairs of distinct
strips are treated in the same way. Therefore,
\begin{equation}
\label{eq:case1-area}
\left|
\left\{
a:a\in\Gamma_i,\ d-a\in\Gamma_j
\right\}
\right|_{\R^2}
\lesssim_K 1.
\end{equation}

Next, fix the discrete coordinate $k_a$.
The condition $a\in\Gamma_i$ restricts $\xi_a$ to an
interval of length $O_K(1)$. Consequently,
\begin{equation}
\label{eq:case1-section}
\sup_{k_a}
\left|
\left\{
\xi_a:
(\xi_a,k_a)\in
\Gamma_i\cap(d-\Gamma_j)
\right\}
\right|
\lesssim_K 1.
\end{equation}

The set $\Gamma_i\cap(d-\Gamma_j)$ is
semi-algebraic of uniformly bounded complexity.
Applying Lemma~\ref{lem:mixed} to
\eqref{eq:case1-area} and \eqref{eq:case1-section}
gives \eqref{eq:case1-fibre}.

The contribution of the second term in
\eqref{eq:case1-pairing} is treated similarly,
using $e=a-b$ in place of $d=a+b$.
This completes the proof of Case 1.

\noindent\textbf{Case 2.}
We now consider the case where $a,b\in\Gamma_R$ or $a,b\in\Gamma_S$. We treat only the former, since the latter follows by interchanging $R$ and $S$.

First consider the region $|S_b|\leq1$.
Together with $|R_b|\leq K$, the identities \eqref{eq:RZ}
show that 
$|\xi_b|\leq\frac{K+1}{2\sqrt3}$ and 
$|k_b|\leq\frac{K+1}{2}$.
Consequently,
\begin{align}\label{eq:case21}
\left|
\left\{
b:|R_b|\leq K,\ |S_b|\leq1
\right\}
\right|_{\R\times\frac12\Z}
\lesssim_K1.
\end{align}
Changing variables from $(a,c)$
to $(v,w)=(a+c,a-c)$, and using the evenness of $F$,
we obtain
\[
\begin{aligned}
\iint \prod_{j=1}^4 F(z_j)\dd a\dd c
&\lesssim
\iint F(v+b)F(v-b)F(w+b)F(w-b)\dd v\dd w\\
&=
\left(\int F(v+b)F(v-b)\dd v\right)^2 \leq \|F\|_2^4,
\end{aligned}
\]
Combining this bound with \eqref{eq:case21} completes the estimate in this region.

Now suppose that $|S_b|>1$.
By \eqref{eq:interaction} and Young's inequality, we have
\begin{equation}
\label{eq:case2-pairing}
\begin{aligned}
\prod_{j=1}^4 F(z_j)
\leq {}&
\frac12 F(a+c+b)^2F(a+c-b)^2 +\frac12 F(a-c+b)^2F(a-c-b)^2.
\end{aligned}
\end{equation}
We first consider the contribution of the first term
in \eqref{eq:case2-pairing}.
Changing variables from $(a,b,c)$ to $(a,b,s)$,
with $s=a+c$, it suffices to estimate
\begin{equation}
\label{eq:case2-integral}
\iint_{\substack{b\in\Gamma_R\\ |S_b|>1}}
F(s+b)^2F(s-b)^2
\left(
\int
\mathbf{1}_{\{|R_a|\leq K\}}
\mathbf{1}_{\{|T(a,b,s-a)|\leq C_0\}}\dd a
\right)\dd b\dd s.
\end{equation}
Thus, it is enough to show that, uniformly in $b,s$
with $|S_b|>1$,
\begin{equation}
\label{eq:case2-fibre}
\left|
\left\{
a:|R_a|\leq K,\ |T(a,b,s-a)|\leq C_0
\right\}
\right|_{\R\times\frac12\Z}
\lesssim_{K,C_0}1.
\end{equation}

To prove \eqref{eq:case2-fibre}, write
\[
E_{b,s}
=
\left\{
a=(\xi_a,k_a)\in\R^2:
|R_a|\leq K,\ |T(a,b,s-a)|\leq C_0
\right\}.
\]
We first estimate its Euclidean area.
In view of \eqref{eq:resonance} and Lemma~\ref{lem:quadratic}, we have 
\[
\left|
\left\{
S_a:
\left|
R_bR_a(R_s-R_a)+S_bS_a(S_s-S_a)
\right|\leq C_0
\right\}
\right|
\lesssim C_0^{1/2}|S_b|^{-1/2}.
\]
By \eqref{eq:det},
integrating over $|R_a|\leq K$ yields
\begin{equation}
\label{eq:case2-area}
|E_{b,s}|_{\R^2}
\lesssim KC_0^{1/2}|S_b|^{-1/2}
\lesssim_{K,C_0}1.
\end{equation}

Next, fix the original discrete coordinate $k_a$.
The condition $|R_a|=|\sqrt3\xi_a+k_a|\leq K$
restricts $\xi_a$ to an interval of length
$2K/\sqrt3$. Consequently,
\begin{equation}
\label{eq:case2-section}
\sup_{k_a\in\frac12\Z}
\left|
\left\{
\xi_a:(\xi_a,k_a)\in E_{b,s}
\right\}
\right|
\leq\frac{2K}{\sqrt3}.
\end{equation}

The set $E_{b,s}$ is semi-algebraic of uniformly
bounded complexity. It is also bounded, since
$R_a$ is bounded and the coefficient of $S_a^2$
is nonzero.
Applying Lemma~\ref{lem:mixed} to \eqref{eq:case2-area}
and \eqref{eq:case2-section} gives
\eqref{eq:case2-fibre}.

Combining \eqref{eq:case2-integral} with \eqref{eq:case2-fibre},
we obtain the desired bound for the contribution of the first term
in \eqref{eq:case2-pairing}.
The second term is treated in the same way.
This completes the proof of Case~2.

\noindent\textbf{Case 3.}
It remains to consider 
$a,b\in\Gamma_0\setminus(\Gamma_R\cup\Gamma_S)$. 

First suppose that $|\xi_c|\leq4K$.
Since $|\xi_a|,|\xi_b|\leq K$, each of the four
frequencies $z_j$ satisfies $|\xi_{z_j}|\leq6K$.
Thus, this contribution is bounded by
$\mathcal I\bigl(F\mathbf{1}_{\{|\xi|\leq6K\}}\bigr)
\lesssim_{K,C_0}\|F\|_2^4$, from Lemma~\ref{lem:bounded-strip}.

Now suppose that $|\xi_c|>4K$.
We first consider the contribution of the first term
in \eqref{eq:case2-pairing}.
Set $s=a+c$. Then
\[
|\xi_s|\geq|\xi_c|-|\xi_a|>3K.
\]
Since the integrand is nonnegative, we may discard
the conditions $a\notin\Gamma_R\cup\Gamma_S$
and $|\xi_{s-a}|>4K$, while retaining $|\xi_s|>3K$.
Changing variables from $(a,b,c)$ to $(a,b,s)$,
it therefore suffices to estimate
\begin{equation}
\label{eq:case3-integral}
\iint_{\substack{
b\in\Gamma_0\setminus(\Gamma_R\cup\Gamma_S)\\
|\xi_s|>3K}}
F(s+b)^2F(s-b)^2
\left(
\int
\mathbf{1}_{\{|\xi_a|\leq K\}}
\mathbf{1}_{\{|T(a,b,s-a)|\leq C_0\}}\dd a
\right)\dd b\dd s.
\end{equation}
Thus, it is enough to show that
\begin{equation}
\label{eq:case3-fibre}
\left|
\left\{
a:|\xi_a|\leq K,\ |T(a,b,s-a)|\leq C_0
\right\}
\right|_{\R\times\frac12\Z}
\lesssim_{K,C_0}1,
\end{equation}
uniformly in $b,s$ with $|R_b|,|S_b|>K$
and $|\xi_s|>3K$.

To prove \eqref{eq:case3-fibre}, write
\[
E_{b,s}
=
\left\{
a=(\xi_a,k_a)\in\R^2:
|\xi_a|\leq K,\ |T(a,b,s-a)|\leq C_0
\right\}.
\]
We first estimate its Euclidean area.
From \eqref{eq:Q-identities}, completing the square gives
\[
T(a,b,s-a)
=
\frac14 Q_b(s)-Q_b\left(a-\frac{s}{2}\right).
\]
Set $v=a-s/2$.
Then the translated set is described by
\[
\xi_v\in I_s,
\qquad
\left|
R_b(\sqrt3\xi_v+k_v)^2
+S_b(\sqrt3\xi_v-k_v)^2-Q_b(s)/4
\right|\leq C_0,
\]
where
$I_s= [-K-\frac{\xi_s}{2},\,
K-\frac{\xi_s}{2} ]$.
Since $|\xi_s|>3K$, we have $\operatorname{dist}(I_s,0) \gtrsim |I_s| \gtrsim K$.
Moreover, $|R_b|,|S_b|>K$.
Applying Lemma~\ref{lem:strip-shell} with $p=R_b$, $q=S_b$
and $I=I_s$, with the fixed sublevel thickness $C_0$,
therefore yields
\begin{equation}
\label{eq:case3-area}
|E_{b,s}|_{\R^2}\lesssim_{K,C_0}1.
\end{equation}
Next, fix the original discrete coordinate $k_a$.
The condition $|\xi_a|\leq K$ restricts $\xi_a$
to an interval of length $2K$. Consequently,
\begin{equation}
\label{eq:case3-section}
\sup_{k_a\in\frac12\Z}
\left|
\left\{
\xi_a:(\xi_a,k_a)\in E_{b,s}
\right\}
\right|
\leq2K.
\end{equation}
Applying Lemma~\ref{lem:mixed} to \eqref{eq:case3-area}
and \eqref{eq:case3-section} gives
\eqref{eq:case3-fibre}. 

Combining \eqref{eq:case3-integral} with
\eqref{eq:case3-fibre}, 
we obtain the desired bound for the contribution
of the first term in \eqref{eq:case2-pairing}.
The second term is treated in the same way after
the change of variables $c\mapsto-c$, which preserves
all the restrictions in this case.
% This completes the proof of Case~3.

\medskip 

This completes the proof.
\end{proof}

\subsection{At most one singular vector}
\label{sec:remaining}

Proposition \ref{prop:strips} controls the contribution to $\mathcal I(F)$
from configurations with at least two singular vectors.
In the complement, at most one of $a,b,c$ is singular,
so at least two are regular (Definition \ref{DEF:reg}). By symmetry, we may take
$b,c$ to be regular, leaving $a$ arbitrary. 
For $d=a+b$ and $e=a-b$, split the interaction set into
\begin{equation*}
\Aone=\left\{
|Q_c(d)-Q_c(e)|\le4C_0,\quad
|T(c,d,e)|^2\le100|Q_c(d)Q_c(e)|
\right\}
\end{equation*}
and
\begin{equation*}
\Atwo=\left\{
|Q_c(d)-Q_c(e)|\le4C_0,\quad
|T(c,d,e)|^2>100|Q_c(d)Q_c(e)|
\right\}.
\end{equation*}
Such a decomposition is inspired by \cite{DFZ} for the hyperbolic Schr\"odinger equation.
The contributions from $\mathcal A_1$ and
$\mathcal A_2$ are controlled by Propositions \ref{prop:direct} and \ref{prop:switched},
which require regularity of $c$ and of $b,c$, respectively.

\subsubsection{The first pairing}
To estimate the contribution of $\mathcal A_1$,
we use the pairing in \eqref{eq:direct-pair}.

\begin{proposition}
\label{prop:direct}
We have
\begin{equation*}
\iiint_{\substack{c\text{ regular}\\(d,e,c)\in\Aone}}
\one_{\{|T(a,b,c)|\leq C_0\}}
\prod_{j=1}^4 F(z_j)\,\dd a\dd b\dd c
\lesssim_{K,C_0}\|F\|_2^4,
\end{equation*}
where $d=a+b$ and $e=a-b$.
\end{proposition}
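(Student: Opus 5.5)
The plan is to fix $c$ and pair the four factors along the $c$-fibres. By \eqref{eq:interaction}, with $d=a+b$ and $e=a-b$ we have $z_1=d+c$, $z_2=d-c$, $z_3=e-c$, $z_4=e+c$. Set $G_c(v)=F(v+c)F(v-c)$. Then
\[
\prod_{j=1}^4F(z_j)=G_c(d)\,G_c(e).
\]
By the first identity in \eqref{eq:Q-identities}, the phase restriction $|T(a,b,c)|\le C_0$ is exactly the first condition $|Q_c(d)-Q_c(e)|\le4C_0$ defining $\Aone$, so it can be absorbed. The change of variables $(a,b)\mapsto(d,e)$ has constant Jacobian on $\R\times\frac12\Z$, up to a parity restriction that only discards mass by nonnegativity.

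For fixed regular $c$, I apply Schur's test to the symmetric kernel $\one_{\Aone}(d,e,c)$, using $G_c(d)G_c(e)\le\frac12(G_c(d)^2+G_c(e)^2)$. This bounds the $c$-fibre by
\[
\Bigl(\sup_{d}\bigl|\{e:(d,e,c)\in\Aone\}\bigr|_{\R\times\frac12\Z}\Bigr)\,\norm{G_c}_2^2 .
\]
Integrating in $c$ gives $\int\norm{G_c}_2^2\dd c=\iint F(v+c)^2F(v-c)^2\dd v\dd c\lesssim\norm F_2^4$. So everything reduces to the fibre bound
\[
\bigl|\{e:|Q_c(e)-Q_c(d)|\le4C_0,\ |T(c,d,e)|^2\le100|Q_c(d)Q_c(e)|\}\bigr|_{\R\times\frac12\Z}\lesssim_{K,C_0}1,
\]
uniformly in $d$ and in regular $c$. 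This set is semi-algebraic of bounded complexity, so I will use Lemma~\ref{lem:mixed} and bound separately its Euclidean area and its $\xi_e$-sections.

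\emph{Sections.} At fixed $k_e$, $Q_c(e)=R_cR_e^2+S_cS_e^2$ is a quadratic polynomial in $\xi_e$ with leading coefficient $3(R_c+S_c)$. Regularity of $c$ gives $|R_c+S_c|>2\sqrt3K$. Hence \eqref{eq:quadratic}, applied after dividing by $4C_0$, bounds every section by $\lesssim C_0^{1/2}K^{-1/2}$. This is precisely where the third regularity condition enters.

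\emph{Area.} Pass to the coordinates $X=|R_c|^{1/2}R_e$ and $Y=|S_c|^{1/2}S_e$. By \eqref{eq:det}, the Jacobian factor is $(2\sqrt3)^{-1}|R_cS_c|^{-1/2}\le K^{-1}$. In these coordinates $Q_c(e)=\pm X^2\pm Y^2$, and $T(c,d,e)$ becomes the associated polarized bilinear form in $(X,Y)$ paired with the transformed $d$. The angular condition $|T|^2\le100|Q_c(d)Q_c(e)|$ is invariant under this normalization.
\begin{itemize}
\item In the elliptic case ($R_cS_c>0$), the $4C_0$-shell of a circle has area $O(C_0)$, so the angular condition is not even needed.
\item In the hyperbolic case, after a rotation by $\pi/4$ the form becomes $2XY$ or $X^2-Y^2$. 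I then invoke \cite[Proposition 2.4, estimate (2.7)]{DFZ}: the set of $e$ in a unit-thickness hyperbolic shell $|Q(e)-Q(d)|\lesssim1$ that also obey $|B(d,e)|^2\le100|Q(d)Q(e)|$ has area $O(1)$ uniformly in $d$. Rescaling the thickness from $1$ to $4C_0$ costs only a constant depending on $C_0$.
\end{itemize}
Restoring the Jacobian gives Euclidean area $\lesssim_{K,C_0}1$. Lemma~\ref{lem:mixed} then yields the fibre bound, and hence the proposition.

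The main obstacle is the hyperbolic area bound. The bare shell $\{|X^2-Y^2-\gamma|\le1\}$ has infinite area, and \eqref{eq:rectangle} gives only logarithmic control, so the bound depends entirely on the angular restriction built into $\Aone$. I need to check that, after the linear change of variables and scaling, the normalized condition matches the hypothesis of (2.7) in \cite{DFZ}, including the degenerate situation where $Q_c(d)$ is small.
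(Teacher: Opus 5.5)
Your proposal is correct and follows the paper's proof essentially step by step. Both use the same pairing $G_c(d)G_c(e)\le\frac12\bigl(G_c(d)^2+G_c(e)^2\bigr)$ along $c$-fibres, the same section bound from the leading coefficient $3(R_c+S_c)$, the elliptic shell when $R_cS_c>0$, and \cite[estimate (2.7)]{DFZ} after the normalisation $X=|R_c|^{1/2}R_e$, $Y=|S_c|^{1/2}S_e$ and a dilation when $R_cS_c<0$, all combined through Lemma~\ref{lem:mixed}.

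On the point you flagged: (2.7) is uniform over all $D$ with $H(D)\neq0$, so small $Q_c(d)$ is harmless. However, the fibre bound must be an essential supremum in $d$, not a supremum, because at $d=0$ the fibre is the whole hyperbolic shell and has infinite mixed measure. The paper handles this by noting that $\{d:Q_c(d)=0\}$ has mixed measure zero: for each fixed $k_d$ it is the zero set of a quadratic in $\xi_d$ with nonzero leading coefficient $3(R_c+S_c)$.
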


\begin{proof}
Since $d=a+b$ and $e=a-b$,
by \eqref{eq:interaction} and Young's inequality, we have
\begin{equation}
\label{eq:direct-pair}
\prod_{j=1}^4 F(z_j)
\leq
\frac12 F(d+c)^2F(d-c)^2
+\frac12 F(e+c)^2F(e-c)^2.
\end{equation}
We first consider the contribution of the first term
in \eqref{eq:direct-pair}.
Changing variables from $(a,b,c)$ to $(d,e,c)$, it therefore suffices to estimate
\begin{equation*}
\iint_{c\text{ regular}}
F(d+c)^2F(d-c)^2
\left(
\int \one_{\Aone}(d,e,c)\,\dd e
\right)\dd d\dd c.
\end{equation*}
Thus, to control the contribution of the first term in \eqref{eq:direct-pair}, it is enough to show that, 
\begin{equation}
\label{eq:direct-measure}
\esssup_d
|E_{d,c}|_{\R\times\frac12\Z}
\lesssim_{K,C_0}1,
\end{equation}
uniformly in regular $c$, where
\[
E_{d,c}
=
\left\{
e=(\xi_e,k_e)\in\R^2:
|Q_c(d)-Q_c(e)|\leq4C_0,\quad
|T(c,d,e)|^2\leq100|Q_c(d)Q_c(e)|
\right\}.
\]
We prove \eqref{eq:direct-measure} by considering
separately the two possible signs of $R_cS_c$.

\medskip
\noindent\textbf{Case 1: $R_cS_c>0$.}
We first estimate the Euclidean area of $E_{d,c}$.
Set $X=|R_c|^{1/2}R_e$ and
$Y=|S_c|^{1/2}S_e$.
By \eqref{eq:det},
\[
\left|
\det\frac{\partial(X,Y)}{\partial(\xi_e,k_e)}
\right|
=2\sqrt3\,|R_cS_c|^{1/2}.
\]
Thus, setting $\gamma=\operatorname{sgn}(R_c)Q_c(d)$, we obtain
\[
\begin{aligned}
|E_{d,c}|_{\R^2}
&\leq
\left|
\left\{
e\in\R^2:
|Q_c(d)-Q_c(e)|\leq4C_0
\right\}
\right|_{\R^2}\\
&=
\frac{1}{2\sqrt3\,|R_cS_c|^{1/2}}
\left|
\left\{
(X,Y)\in\R^2:
|X^2+Y^2-\gamma|\leq4C_0
\right\}
\right|_{\R^2}\\
&\lesssim_{C_0}|R_cS_c|^{-1/2}.
\end{aligned}
\]
Here the last inequality follows from the fact that
\begin{align}
\label{eq:shell}
\left|
\left\{
(X,Y)\in\R^2:
|X^2+Y^2-\gamma|\leq4C_0
\right\}
\right|_{\R^2} \les 8\pi C_0,
\end{align}
uniformly in $\gamma$, as can be seen by using polar
coordinates.

Next, fix the original discrete coordinate
$k_e\in\frac12\Z$.
By \eqref{eq:RZ}, we have
\[
Q_c(e)
=3(R_c+S_c)\xi_e^2
+2\sqrt3(R_c-S_c)k_e\xi_e
+(R_c+S_c)k_e^2.
\]
Since $c$ is regular, the leading coefficient
$3(R_c+S_c)$ is nonzero.
Applying \eqref{eq:quadratic} to the first restriction
in the definition of $E_{d,c}$, we obtain
\[
\sup_{k_e\in\frac12\Z}
\left|
\left\{
\xi_e:(\xi_e,k_e)\in E_{d,c}
\right\}
\right|
\lesssim_{C_0}|R_c+S_c|^{-1/2}.
\]

The set $E_{d,c}$ is bounded, since it is contained
in the elliptic shell considered above, and is
semi-algebraic of uniformly bounded complexity.
Applying Lemma~\ref{lem:mixed}, after the fixed
dilation $k_e\mapsto2k_e$, and using the preceding
two estimates, we conclude that
\[
|E_{d,c}|_{\R\times\frac12\Z}
\lesssim_{C_0}
|R_cS_c|^{-1/2}+|R_c+S_c|^{-1/2}
\lesssim_{K,C_0}1.
\]
The last inequality follows from the regularity of $c$.
This proves \eqref{eq:direct-measure} in Case~1.

\medskip
\noindent\textbf{Case 2: $R_cS_c<0$.}
We first assume that $Q_c(d)\neq0$; the exceptional
set will be treated at the end of this case.
As in Case~1, recall
$X=|R_c|^{1/2}R_e$ and
$Y=|S_c|^{1/2}S_e$,
and write
\[
Z=(X,Y),
\qquad
D=(D_1,D_2)
=\bigl(|R_c|^{1/2}R_d,\,
       |S_c|^{1/2}S_d\bigr).
\]
For $U,V\in\R^2$, recall the notation from \cite{DFZ} that
\[
H(U)=U_1^2-U_2^2,
\qquad
H(U,V)=U_1V_1-U_2V_2.
\]
Since $R_c$ and $S_c$ have opposite signs, the two
restrictions defining $E_{d,c}$ become
\begin{equation}
\label{eq:direct-hyperbolic}
|H(D)-H(Z)|\leq4C_0,
\qquad
|H(D,Z)|^2\leq100|H(D)H(Z)|.
\end{equation}
Denote the region defined by
\eqref{eq:direct-hyperbolic} by $\mathcal E_D$.
Notice that $H(D)\neq0$, since $Q_c(d)\neq0$.

The simultaneous dilation
\[
(D,Z)\mapsto
\left(
\frac{D}{2\sqrt{C_0}},
\frac{Z}{2\sqrt{C_0}}
\right)
\]
normalises the first restriction in
\eqref{eq:direct-hyperbolic} to thickness one
and preserves the form of the second restriction.
We may therefore apply the Euclidean estimate
\cite[Proposition 2.4, estimate (2.7)]{DFZ},
whose proof applies to every fixed vector in $\R^2$
with nonzero value of $H$.
Scaling back gives
\[
|\mathcal E_D|_{\R^2}\lesssim C_0,
\]
uniformly in $D$ with $H(D)\neq0$.
Since
\[
\left|
\det\frac{\partial(X,Y)}{\partial(\xi_e,k_e)}
\right|
=2\sqrt3\,|R_cS_c|^{1/2},
\]
we obtain
\begin{align}
\label{eq:4.11}
|E_{d,c}|_{\R^2}
=
\frac{|\mathcal E_D|_{\R^2}}
     {2\sqrt3\,|R_cS_c|^{1/2}}
\lesssim_{C_0}|R_cS_c|^{-1/2}.
\end{align}

Next, fix $k_e\in\frac12\Z$.
As a polynomial in $\xi_e$, the expression
$Q_c(e)-Q_c(d)$ again has leading coefficient
$3(R_c+S_c)$, which is nonzero by the regularity of $c$.
Applying \eqref{eq:quadratic} to the first restriction
in the definition of $E_{d,c}$, we obtain
\begin{align}
\label{eq:4.12}
\sup_{k_e\in\frac12\Z}
\left|
\left\{
\xi_e:(\xi_e,k_e)\in E_{d,c}
\right\}
\right|
\lesssim_{C_0}|R_c+S_c|^{-1/2}.
\end{align}

Applying Lemma~\ref{lem:mixed} to the area estimate
\eqref{eq:4.11} and the section estimate
\eqref{eq:4.12}, we obtain
\[
|E_{d,c}|_{\R\times\frac12\Z}
\lesssim_{K,C_0}1,
\]
uniformly in $d$ with $Q_c(d)\neq0$ and in regular $c$.

It remains to account for the excluded vectors $d$
with $Q_c(d)=0$.
For each fixed $k_d\in\frac12\Z$, this equation is
quadratic in $\xi_d$, with nonzero leading coefficient
$3(R_c+S_c)$, and therefore has at most two solutions.
Consequently, for each regular $c$, the set
$\{d:Q_c(d)=0\}$ has mixed measure zero.
This proves \eqref{eq:direct-measure} in Case~2.

\medskip
We have now established \eqref{eq:direct-measure}
for every regular $c$.
Thus we control the contribution of the first term in \eqref{eq:direct-pair} as required.
The contribution of the second term in
\eqref{eq:direct-pair} is treated in the same way.
Combining the two estimates completes the proof.
\end{proof}

\subsubsection{The second pairing}
To estimate the contribution of $\mathcal A_2$,
we use the pairing in \eqref{eq:switched-pair}.

\begin{proposition}
\label{prop:switched}
We have
\begin{equation}
\label{eq:switched-estimate}
\iiint_{\substack{b,c\text{ regular}\\
(a+b,a-b,c)\in\Atwo}}
\one_{\{|T(a,b,c)|\leq C_0\}}
\prod_{j=1}^4F(z_j)\,\dd a\dd b\dd c
\lesssim_{K,C_0}\|F\|_2^4.
\end{equation}
\end{proposition}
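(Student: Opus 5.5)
We pair the Fourier factors differently from the first pairing: we fix $b$ and $s=a+c$, and integrate over $a$ with $c=s-a$. By \eqref{eq:interaction} and Young's inequality,
\begin{equation*}
\prod_{j=1}^4F(z_j)\le\frac12F(s+b)^2F(s-b)^2+\frac12F(a-c+b)^2F(a-c-b)^2 .
\end{equation*}
For the first term we change variables $(a,b,c)\mapsto(a,b,s)$. It then suffices to show, uniformly in $s$ and in regular $b$, that
\begin{equation*}
\Bigl|\bigl\{a:\ s-a\ \text{regular},\ (a+b,a-b,s-a)\in\Atwo,\ |T(a,b,s-a)|\le C_0\bigr\}\Bigr|_{\R\times\frac12\Z}\lesssim_{K,C_0}1 .
\end{equation*}
The second term is treated identically with $s=a-c$, using $c\mapsto -c$, which preserves regularity and $\Atwo$ because $Q_c$ and $T(c,\cdot,\cdot)$ change sign together. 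As before, we bound the Euclidean area and the maximal $\xi_a$-section of this semi-algebraic set of bounded complexity, and then invoke Lemma~\ref{lem:mixed}.

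For the section bound, we write $T(a,b,s-a)=\tfrac14Q_b(s)-Q_b(a-s/2)$, as in Case 3 of Proposition~\ref{prop:strips}. For fixed $k_a$ this is a quadratic polynomial in $\xi_a$ with leading coefficient $-3(R_b+S_b)$. Since $b$ is regular, $|R_b+S_b|>2\sqrt3K$, and \eqref{eq:quadratic} gives sections of length $\lesssim C_0^{1/2}K^{-1/2}$.

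The area is the real issue. The constraint $|Q_b(v)-\gamma|\le C_0$ alone, with $v=a-s/2$ and $\gamma=Q_b(s)/4$, gives area $\lesssim|R_bS_b|^{-1/2}$ only in the elliptic case $R_bS_b>0$, by \eqref{eq:shell}. When $R_bS_b<0$ the hyperbolic shell has infinite area, so we must use the $\Atwo$ condition $|T(c,d,e)|^2>100|Q_c(d)Q_c(e)|$. Here $c=s-a$ varies with $a$, which is why the reduction to \cite{DFZ} used in Proposition~\ref{prop:direct} does not apply. The plan is to rewrite the $\Atwo$ condition in the variables $a,b$ using symmetry of $T$. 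By \eqref{eq:Q-identities}, $T(c,d,e)=Q_c(a)-Q_c(b)$, and $Q_c(d)Q_c(e)=\tfrac1{16}\bigl[(Q_c(d)+Q_c(e))^2-(Q_c(d)-Q_c(e))^2\bigr]$, where $|Q_c(d)-Q_c(e)|\le4C_0$. Also $Q_c(d)+Q_c(e)=2(Q_c(a)+Q_c(b))$. Hence, up to $O(C_0)$ errors, the condition says
\begin{equation*}
|Q_c(a)-Q_c(b)|^2\gtrsim 25\,|Q_c(a)+Q_c(b)|^2 ,
\end{equation*}
that is, $Q_c(a)$ and $Q_c(b)$ are nearly opposite with comparable size. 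In terms of $T$ this reads $T(a,a,c)\approx -T(b,b,c)$.

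The hardest step is turning this into a "transversality" statement for the hyperbola $\{a:|Q_b(a-s/2)-\gamma|\le C_0\}$. The aim is to show that the $\Atwo$ condition forces the gradient of the map $a\mapsto T(a,b,s-a)$ to be large compared with the curvature scale of the hyperbola, or that it confines $a$ near the asymptotes to a region of bounded length. In normalized coordinates $X=|R_b|^{1/2}R_v$, $Y=|S_b|^{1/2}S_v$, the shell is $|X^2-Y^2-\gamma'|\le C_0$. The approach is to show that the $\Atwo$ condition localizes $(X,Y)$ to a region of the form $I\times J$ or $\R\times J$ with $J$ separated from zero at a bounded logarithmic ratio. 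Estimates \eqref{eq:rectangle} and \eqref{eq:away} of Lemma~\ref{lem:quadratic}, or Lemma~\ref{lem:strip-shell}, then give area $O(1)$ after restoring the Jacobian $|R_bS_b|^{-1/2}\lesssim K^{-1}$.

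Concretely, the first thing to try is a case split on which of $|X|,|Y|$ dominates. On each dyadic range we check that the $\Atwo$ inequality, combined with regularity of $c=s-a$ (so that $|R_c|,|S_c|>K$), pins the dominant variable to an interval of length comparable to its distance from zero. If the logarithmic bound cannot be made uniform, the fallback is a dyadic decomposition in $|Q_c(a)|$. On each piece the area is bounded by a decaying factor, and the pieces are summed.
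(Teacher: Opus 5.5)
Your setup is correct and matches the paper: the pairing $F(s+b)^2F(s-b)^2$ with $s=a+c$, the reduction to a uniform fibre bound in $a$ for regular $b$ and arbitrary $s$, the $c\mapsto-c$ symmetry for the second term, the section bound from the leading coefficient $-3(R_b+S_b)$, and Lemma~\ref{lem:mixed}. Your reading of the $\Atwo$ condition is also where the paper starts. With $A=Q_c(a)$ and $C=Q_c(b)$ one has $T(c,d,e)=A-C$ and $Q_c(d)Q_c(e)=(A+C)^2-4T^2$. The prefactor in your identity should be $\tfrac14$, not $\tfrac1{16}$. Hence $|A+C|\le\tfrac1{10}|A-C|+2|T|$. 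For $R_bS_b>0$, your elliptic-shell bound does finish the argument.

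The gap is that the area bound for $R_bS_b<0$, which is the crux of the proposition, is never proved. You name a target (localizing the normalized shell to a rectangle or half-strip with bounded logarithmic ratio) and a fallback (dyadic pieces in $|Q_c(a)|$). But you give no argument that the cubic condition $Q_{s-a}(a)\approx-Q_{s-a}(b)$, whose coefficients move with $a$, yields either. A genuine cutoff is indispensable, because the hyperbolic shell carries area $\sim|R_bS_b|^{-1/2}$ on each large dyadic scale along its asymptotes.

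The missing idea is to use the ratios $\lambda=R_a/R_b$, $\mu=S_a/S_b$ and the identities
\[
Q_c(a)=(\lambda+\mu)T-\lambda\mu C,\qquad T-\mu C=R_cR_b^2(\lambda-\mu),\qquad T-\lambda C=S_cS_b^2(\mu-\lambda),
\]
in which $C=R_b^2(R_s-R_a)+S_b^2(S_s-S_a)$ is affine in $a$. Since $|T|\le C_0$, near-opposition forces either $\lambda\mu\in[1/2,2]$ or $|C|\lesssim(1+|\lambda|+|\mu|)/(1+|\lambda\mu|)$.

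The region $\lambda\mu\in[1/2,2]$ is not localized at all. It is cut into dyadic rectangles $|R_a|\sim|R_b|L$, $|S_a|\sim|S_b|/L$. Interpolating the Fubini bound with \eqref{eq:rectangle} gives $|R_bS_b|^{-1/2}\min\{|R_b|^{3/2}L,|S_b|^{3/2}/L\}^{1/2}$ per piece, which sums over $L$ to $|R_bS_b|^{-1/8}$.

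In the other region, the part with $|\lambda|+|\mu|\le M$ is a single rectangle. For $|\lambda|+|\mu|>M$, the regularity of $c=s-a$ (i.e.\ $|R_c|,|S_c|>K$) in the last two identities forces $|\lambda|\sim|\mu|$ and $|\lambda-\mu|\lesssim\max(R_b^2,S_b^2)^{-1}$. So $a$ lies in the intersection of the transversal strips $\{|C|\lesssim1\}$ and $\{|\lambda-\mu|\lesssim\max(R_b^2,S_b^2)^{-1}\}$. This set has bounded area because the Jacobian $|R_b^3+S_b^3|/|R_bS_b|$ is $\gtrsim K(R_b^2+S_b^2)/|R_bS_b|$ by regularity of $b$. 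This argument also removes the need for your sign split on $R_bS_b$.
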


\begin{proof}
By \eqref{eq:interaction} and Young's inequality, we have
\begin{equation}
\label{eq:switched-pair}
\begin{aligned}
\prod_{j=1}^4F(z_j)
\leq{} 
\frac12 F(a+c+b)^2F(a+c-b)^2 +\frac12 F(a-c+b)^2F(a-c-b)^2.
\end{aligned}
\end{equation}
We only consider the contribution from the first term in \eqref{eq:switched-pair}, as the estimate for the second term is similar.
Set $s=a+c$ and define
\[
E_{b,s}
=
\left\{
a=(\xi_a,k_a)\in\R^2:
c=s-a\text{ is regular},\quad
(a+b,a-b,c)\in\Atwo
\right\}.
\]
Changing variables from $(a,b,c)$ to $(a,b,s)$,
it suffices to estimate
\begin{equation*}
\iint_{b\text{ regular}}
F(s+b)^2F(s-b)^2
\left(
\int\one_{E_{b,s}}(a)\,\dd a
\right)\dd s\dd b.
\end{equation*}
Thus, for \eqref{eq:switched-estimate}, it is enough to show that, uniformly in regular
$b$ and arbitrary $s$,
\begin{equation}
\label{eq:switched-measure}
|E_{b,s}|_{\R\times\frac12\Z}
\lesssim_{K,C_0}1.
\end{equation}

Throughout this proof, write
\[
(R_a,S_a)=(x,y),\qquad
(R_b,S_b)=(p,q),\qquad
(R_s,S_s)=(m,n),
\]
and set
\begin{align}\label{eq:lm}
\lambda=\frac{x}{p},
\qquad
\mu=\frac{y}{q}.
\end{align}
These divisions are valid because $b$ is regular.
Since $c=s-a$, we have
\begin{equation}
\label{eq:switched-phase}
T:=T(a,b,s-a)
=px(m-x)+qy(n-y).
\end{equation}
In particular, $|T|\leq C_0$ on $E_{b,s}$.
The change of variables $a\mapsto(x,y)$ has
absolute Jacobian $2\sqrt3$, so Euclidean area
estimates in these coordinates give the corresponding
estimates in the original coordinates.

We first record the section estimate.
For fixed $k_a\in\frac12\Z$, substituting
\[
x=\sqrt3\,\xi_a+k_a,
\qquad
y=\sqrt3\,\xi_a-k_a
\]
into \eqref{eq:switched-phase} makes $T$ a quadratic
polynomial in $\xi_a$ with leading coefficient
$-3(p+q)$.
Since $b$ is regular, this coefficient is nonzero.
Applying \eqref{eq:quadratic}, we obtain
\begin{equation}
\label{eq:switched-section}
\sup_{k_a\in\frac12\Z}
\left|
\left\{
\xi_a:(\xi_a,k_a)\in E_{b,s}
\right\}
\right|
\lesssim_{C_0}|p+q|^{-1/2}
\lesssim_{K,C_0}1.
\end{equation}

We also record a consequence of the quadratic
sublevel estimates for rectangles.
Completing the square in \eqref{eq:switched-phase},
we have
\[
T=\frac{pm^2+qn^2}{4}
-p(x-m/2)^2-q(y-n/2)^2.
\]
For any intervals $I,J$, we have
\begin{equation}
\label{eq:switched-rectangle}
\begin{aligned}
&\left|
\left\{
(x,y)\in I\times J:|T|\leq C_0
\right\}
\right|_{\R^2} \lesssim_{C_0}
|pq|^{-1/2}
\min\bigl\{|p|^{1/2}|I|,\,
          |q|^{1/2}|J|\bigr\}^{1/2}.
\end{aligned}
\end{equation}
To see this, under the change of variables
$X=|p|^{1/2}(x-m/2)$ and
$Y=|q|^{1/2}(y-n/2)$, write
$r=\min\{|p|^{1/2}|I|,|q|^{1/2}|J|\}$.
The transformed region has area $O_{C_0}(r)$
by \eqref{eq:quadratic} and Fubini's theorem,
and $O_{C_0}(1+\log(2+r))$
by \eqref{eq:shell} and \eqref{eq:rectangle}.
Since
$\min\{r,1+\log(2+r)\}\lesssim r^{1/2}$,
restoring the Jacobian $|pq|^{-1/2}$ gives
\eqref{eq:switched-rectangle}.

We now split $E_{b,s}=E_{b,s}^{(1)}\cup E_{b,s}^{(2)}$,
where
\[
E_{b,s}^{(1)}
=
E_{b,s}\cap\{1/2\leq\lambda\mu\leq2\},
\qquad
E_{b,s}^{(2)}
=
E_{b,s}\cap\{\lambda\mu\notin[1/2,2]\}.
\]

\medskip
\noindent\textbf{Case 1: $1/2\leq\lambda\mu\leq2$.}
We first estimate the Euclidean area of
$E_{b,s}^{(1)}$.
Decompose this set into dyadic pieces $E_L$,
$L\in2^\Z$, 
\[
E_{b,s}^{(1)} = \bigsqcup_{L \in 2^\Z} E_L,
\qquad
E_L=E_{b,s}^{(1)}
\cap\{L\leq|\lambda|<2L\}.
\]
On each $E_L$, we have
$L\leq|\lambda|<2L$ and
$\frac1{4L}\leq|\mu|\leq\frac2L$.
Since $\lambda$ and $\mu$ have the same sign,
each piece is contained in the union of two
rectangles in the $(x,y)$-plane with side lengths
$|I| = O(|p|L)$ and $|J| = O(|q|/L)$.
Applying \eqref{eq:switched-rectangle}, we obtain
\[
|E_L|_{\R^2}
\lesssim_{C_0}
|pq|^{-1/2}
\min\left\{
|p|^{3/2}L,\,
\frac{|q|^{3/2}}{L}
\right\}^{1/2}.
\]
Summing the two geometric tails over $L\in2^\Z$
gives
\[
|E_{b,s}^{(1)}|_{\R^2}
\lesssim_{C_0}|pq|^{-1/8}
\lesssim_{K,C_0}1.
\]
Together with \eqref{eq:switched-section} and Lemma~\ref{lem:mixed}, we obtain
\begin{align}\label{eq:E1}
|E_{b,s}^{(1)}|_{\R\times\frac12\Z}
\lesssim_{K,C_0}1.
\end{align}

\medskip
\noindent\textbf{Case 2: $\lambda\mu\notin[1/2,2]$.}
Set
\begin{align}\label{eq:AC}
A=Q_c(a),
\qquad
C=Q_c(b)=p^2(m-x)+q^2(n-y).
\end{align}
Direct calculation gives
\begin{equation}
\label{eq:lambda-identities}
\begin{aligned}
A&=(\lambda+\mu)T-\lambda\mu C,\\
T-\mu C&=(m-x)p^2(\lambda-\mu),\\
T-\lambda C&=(n-y)q^2(\mu-\lambda).
\end{aligned}
\end{equation}
The defining inequality of $\Atwo$ can be written as
\[
|A-C|^2>100\left|(A+C)^2-4T^2\right|.
\]
Consequently,
\[
|A+C|\leq\frac1{10}|A-C|+2|T|.
\]
Using the first identity in
\eqref{eq:lambda-identities}, we obtain
\[
\left(
|1-\lambda\mu|-\frac1{10}|1+\lambda\mu|
\right)|C|
\leq
\left(
2+\frac{11}{10}|\lambda+\mu|
\right)|T|.
\]
Since
\[
|1-h|-\frac1{10}|1+h|
\gtrsim1+|h|
\qquad\text{for }h\notin[1/2,2],
\]
it follows that
\begin{equation}
\label{eq:C-bound}
|C|\lesssim_{C_0}
\frac{1+|\lambda|+|\mu|}
     {1+|\lambda\mu|}.
\end{equation}

Fix a sufficiently large constant $M=M(K,C_0)$.
We consider two cases.

\medskip

\noindent
{\it Case 2.1: $|\lambda|+|\mu|\leq M$.}
Since $x=p\lambda$ and $y=q\mu$, the condition
$|\lambda|+|\mu|\leq M$ implies
\[
(x,y)\in I_M\times J_M,
\qquad
I_M=[-M|p|,M|p|],
\qquad
J_M=[-M|q|,M|q|].
\]
Applying \eqref{eq:switched-rectangle} to these intervals gives
\[
\begin{aligned}
| \{(x,y)\in & I_M\times J_M:
|px(m-x)+qy(n-y)|\leq C_0 \} |_{\R^2} \\
&\lesssim_{C_0}
|pq|^{-1/2}
\min\left\{
|p|^{1/2}|I_M|,
|q|^{1/2}|J_M|
\right\}^{1/2}\\
&\lesssim_{M,C_0}
|pq|^{-1/2}
\min\left\{
|p|^{3/2},|q|^{3/2}
\right\}^{1/2}\\
&\lesssim_{M,C_0}
|pq|^{-1/2}|pq|^{3/8}
=|pq|^{-1/8},
\end{aligned}
\]
where the last inequality uses
$\min\{|p|^{3/2},|q|^{3/2}\}\leq |pq|^{3/4}$.
The image of
$E^{(2)}_{b,s}\cap\{a:|\lambda|+|\mu|\leq M\}$
under $a\mapsto(R_a,S_a)=(x,y)$ is contained in the set
estimated above. Since this change of variables has
absolute Jacobian $2\sqrt3$ by \eqref{eq:det}, we conclude that
\begin{align}\label{eq:Esmall}
\left|
E^{(2)}_{b,s}\cap
\left\{a:|\lambda|+|\mu|\leq M\right\}
\right|_{\R^2}
\lesssim_{K,C_0}|pq|^{-1/8}
\lesssim_{K,C_0}1.
\end{align}
Here the measure is the Euclidean area in the original
$(\xi_a,k_a)$-coordinates, and the final bound follows
from $|p|,|q|>K$.

\medskip

\noindent
{\it Case 2.2: $|\lambda|+|\mu|\geq M$.}
We claim that
\[
\frac12|\lambda|<|\mu|<2|\lambda|.
\]
Indeed, suppose first that $|\lambda|\geq2|\mu|$.
Since $M$ is sufficiently large, $|\lambda|\geq1$,
and \eqref{eq:C-bound} gives
$|\mu C|\lesssim_{C_0}1$.
By the regularity of $c$, we have $|m-x|>K$.
The second identity in \eqref{eq:lambda-identities}
therefore yields
\[
|\lambda-\mu|
\leq
\frac{|T|+|\mu C|}{|m-x|p^2}
\lesssim_{K,C_0}1.
\]
This contradicts
$|\lambda-\mu|\geq|\lambda|/2$
when $M$ is sufficiently large.
The alternative $|\mu|\geq2|\lambda|$ is treated
in the same way using the third identity in
\eqref{eq:lambda-identities}.
This proves the claim.

Since $|\lambda| \sim |\mu|$,
by \eqref{eq:C-bound} we have
$|\lambda C|+|\mu C|\lesssim_{C_0}1$.
Applying the last two identities in
\eqref{eq:lambda-identities}, and using
$|m-x|,|n-y|>K$, we obtain
\begin{equation}
\label{eq:lambda-close}
|\lambda-\mu|
\lesssim_{K,C_0}\frac1{\max(p^2,q^2)} \lesssim_{K,C_0} 1,
\end{equation}
since $b$ is regular and $p = R_b$, $q = S_b$.

Let
\[
E_{\mathrm{large}}
=
E_{b,s}^{(2)}
\cap\{a:|\lambda|+|\mu|>M\}.
\]
Make the affine change of variables
\begin{align}\label{eq:uv}
u=p^2m+q^2n-p^2x-q^2y,
\qquad
v=\frac{x}{p}-\frac{y}{q}.
\end{align}
Since $u=C$ from \eqref{eq:AC} and $v=\lambda-\mu$ from \eqref{eq:lm} and \eqref{eq:uv}, the preceding bounds \eqref{eq:C-bound} and \eqref{eq:lambda-close} imply
\[
|u|\leq C_1,
\qquad
|v|\leq\frac{C_1}{\max\{p^2,q^2\}},
\]
on $E_{\mathrm{large}}$, where $C_1=C_1(K,C_0)$.
Moreover, by the regularity of $b$, we have from \eqref{eq:uv} that
\[
\left| \det\frac{\partial(u,v)}{\partial(x,y)} \right|
=
\left| \frac{p^3+q^3}{pq} \right| \ges K \frac{p^2+q^2}{|pq|}.
\] 
Thus, we obtain
\[
\begin{aligned}
|E_{\mathrm{large}}|_{\R^2}
&\leq
\frac{|pq|}{2\sqrt3\,|p^3+q^3|}
\int_{|u|\leq C_1}
\int_{|v|\leq C_1/\max\{p^2,q^2\}}
\dd v\,\dd u\\ 
&\lesssim_{K,C_0}
\frac{|pq|}
{(p^2+q^2)\max\{p^2,q^2\}}
\lesssim_{K,C_0}1.
\end{aligned}
\]
Together with \eqref{eq:Esmall},
this proves
\begin{align}\label{eq:case2area}
|E_{b,s}^{(2)}|_{\R^2}\lesssim_{K,C_0}1.
\end{align}

As in Case~1, applying Lemma~\ref{lem:mixed} to
$E_{b,s}^{(2)}$ with \eqref{eq:case2area} and
\eqref{eq:switched-section}, we obtain
\begin{align}\label{eq:E2}
|E_{b,s}^{(2)}|_{\R\times\frac12\Z}
\lesssim_{K,C_0}1.
\end{align}
Combining the two cases \eqref{eq:E1} and \eqref{eq:E2} proves
\eqref{eq:switched-measure}.

The contribution of the second term in
\eqref{eq:switched-pair} is treated in the same way.
\end{proof}

\subsection{Completion of the proof}
\label{sec:completion}

\begin{proof}[Proof of Proposition \ref{prop:quartic}]
By Proposition \ref{prop:strips}, the contribution with at least two singular vectors is bounded by $C\norm F_2^4$. In the complement, at least two vectors are regular. 
Propositions \ref{prop:direct} and \ref{prop:switched} bound their contributions. 
\end{proof}

Now, we are ready to prove the main theorem.

\begin{proof}[Proof of Theorem \ref{thm:main}]
We first assume that $f$ is smooth with bounded Fourier support.
By the Fourier reduction \eqref{eq:fourier-reduction}
and Proposition \ref{prop:quartic}, we obtain
\[
\norm{U(t)f}_{L^4([0,1]\times\R\times\T)}^4
\lesssim
\norm f_{L^2(\R\times\T)}^4.
\]
By density, this estimate extends to every
$f\in L^2(\R\times\T)$, proving \eqref{eq:main}.

For $b>1/2$, the embedding \eqref{eq:embedding}
follows from \eqref{eq:main} by the standard transference
principle \cite[Lemma 2.9]{Tao06} and a smooth
partition of unity in time. 
\end{proof}

\subsection*{Acknowledgments}
W.H. was supported by the National Natural Science
Foundation of China (Grant Nos.~12401118 and 12571106)
and the China Scholarship Council
(Grant No.~202508410148).
Y.W. was supported by the EPSRC Mathematical Sciences
Small Grant (Grant No.~UKRI1116).
O.Y. was supported by the Council of Higher Education
(YÖK) of Türkiye through the Young Brains Program
(Genç Beyinler Programı -- GEP) scholarship.

Part of this work was carried out during O.Y.'s visit
to the University of Birmingham from December 2025
to February 2026. O.Y. thanks the University
for its hospitality.

\subsection*{Declaration on the use of generative AI}
The main ideas and the key arguments of this work were developed
between December 2025 and February 2026, during O.Y.'s
visit to the University of Birmingham.
During the subsequent preparation of the manuscript,
the authors used ChatGPT (Codex, GPT-5.6, and GPT-6.0)
to assist with reorganising the proofs, improving
the exposition, and identifying potential gaps
or inconsistencies in the arguments.
The authors have reviewed all AI-assisted revisions,
verified the mathematical arguments, and take full
responsibility for the accuracy and integrity
of the manuscript.

\end{document}